\newtheorem{theorem}[subsection]{Theorem}
\newtheorem{corollary}[subsection]{Corollary}
\newtheorem{proposition}[subsection]{Proposition}
\newtheorem{lemma}[subsection]{Lemma}
\DeclareMathOperator{\aut}{Aut}
\DeclareMathOperator{\fun}{Fun}
\DeclareMathOperator*{\colim}{colim}
\DeclareMathOperator{\mor}{Hom}
\DeclareMathOperator{\ob}{ob}
\DeclareMathOperator{\morphisms}{mor}
\DeclareMathOperator{\sgn}{sgn}
\DeclareMathOperator{\id}{id}
\newcommand{\op}{\mathrm{op}}
\newcommand{\simpcat}{\mathbf{sCat}}
\newcommand{\simpcatinv}{\mathbf{isCat}}
\newcommand{\sset}{\mathbf{sSet}}
\newcommand{\groundcat}{\mathcal{K}}
\newcommand{\ssetinv}{\mathbf{isSet}}
\newcommand{\hcr}{\mathfrak{C}}
\newcommand{\hcn}{N_{hc}}
\newcommand{\hcrinv}{\widetilde{\mathfrak{C}}}
\newcommand{\hcninv}{\widetilde{N}_{hc}}
\newcommand{\catinv}{\mathbf{iCat}}
\newcommand{\ground}{\mathcal{K}}
\newcommand{\cat}{\mathbf{Cat}}
\newcommand{\dcat}{\mathbf{dCat}}
\newcommand{\set}{\mathbf{Set}}
\newcommand{\gpd}{\mathbf{Gpd}}
\newcommand{\rlp}[1]{{#1}^\boxslash}
\newcommand{\llp}[1]{{}^\boxslash\!{#1}}
\title[A criterion for right-induced model structures]{A criterion for existence of right-induced model structures}
\author[G.~C.~Drummond-Cole and P.~Hackney]{Gabriel C. Drummond-Cole and Philip Hackney}
\begin{document}

\maketitle
\begin{abstract}
Suppose that $F: \mathcal{N} \to \mathcal{M}$ is a functor whose target is a Quillen model category. 
We give a succinct sufficient condition for the existence of the right-induced model category structure on $\mathcal{N}$ in the case when $F$ admits both adjoints.
We give several examples, including change-of-rings, operad-like structures, and anti-involutive structures on infinity categories.
For the last of these, we explore anti-involutive structures for several different models of $(\infty,1)$-categories, and show that known Quillen equivalences between base model categories lift to equivalences.
\end{abstract}
\maketitle
\section{Introduction}
Suppose that $\mathcal{M}$ is a model category and $F: \mathcal{N} \to \mathcal{M}$ is a functor. 
A standard question is whether we can use $F$ to build a model structure on $\mathcal{N}$ with the property that a morphism $f$ is a weak equivalence in $\mathcal{N}$ if and only if $F(f)$ is a weak equivalence in $\mathcal{M}$, that is, so that $F$ `creates' weak equivalences.
When $F$ is a right adjoint and $\mathcal{M}$ is cofibrantly generated, there is a classical theorem indicating that in good cases we can do this $F$ also creates fibrations. 
We here give a simple criterion (Theorem~\ref{theorem: adjoint string, Quillen condition}) for the existence of this type of `right-induced' model structure: if $F$ has both a left adjoint $L$ and a right adjoint $R$ so that $(FL, FR)$ is a Quillen self-adjunction on $\mathcal{M}$, then such a model structure exists on $\mathcal{N}$.

We apply this criterion to several interesting examples in this paper, some known, some folkloric, and some new. 
A primary source of examples come from diagram categories: if $\mathcal{C} \to \mathcal{D}$ is a functor and $\groundcat$ is bicomplete, then $\groundcat^{\mathcal{D}} \to \groundcat^{\mathcal{C}}$ admits both adjoints, given by Kan extension. 
If a category $\mathcal{C}$ admits an action by a group $G$, then the inclusion of $\mathcal{C}$ into the semi-direct product $\mathcal{C} \rtimes G$ is an example of such a functor; in Theorem~\ref{theorem: semidirect} we explain suitable conditions for the group action to be compatible with a model structure on $\groundcat^{\mathcal{C}}$.

One salient example is the nontrivial action of $C_2$ on the simplicial category $\Delta$; presheaves on $\nabla \coloneqq \Delta \rtimes C_2$ are called `real simplicial sets' in the literature on real algebraic K-theory (see, for instance,~\cite{Dotto:ECFBZ2ARAK,HesselholtMadsen:RAKT}).
We show here how the Joyal model structure on the category of simplicial sets, which models $(\infty,1)$-categories, lifts to a model structure on $\nabla$-presheaves.

We also study categories with anti-involution; particular examples include exact categories with strict duality~\cite{Schlichting:HKTEC} and dagger categories~\cite{Selinger:DCCCCPM,Baez:QQCTP}.
We show that the folk model structure on $\cat$ lifts to a model structure on categories with anti-involution.
Similarly, the Bergner model structure on simplicial categories lifts to a model structure on simplicial categories with anti-involution. Simplicial categories with the Bergner structure also model $(\infty,1)$-categories, 
and so we have two model categories which should model $(\infty,1)$-categories with (strict) anti-involution.

Homotopy coherent nerve and realization form a Quillen equivalence between simplicial sets with the Joyal model structure and simplicial categories with the Bergner model structure (see, e.g.,~\cite{Lurie:HTT,DuggerSpivak:MSQC}).  This adjunction lifts to the anti-involutive versions of these categories, and we show that the lift is also a Quillen equivalence (Theorem~\ref{theorem: hcn qe lifts}). 
The same technique can be used to show that Quillen equivalences lift to Quillen equivalences on the model structures guaranteed by Theorem~\ref{theorem: adjoint string, Quillen condition}, as long as the adjunctions themselves lift (Theorem~\ref{theorem lanari}).
In particular, we expect that our methods should yield a straightforward argument that various models for $(\infty,n)$-categories equipped with (twisted) actions of $(C_2)^n$ are equivalent.

\subsection*{Acknowledgements}
The authors would like to thank Damien Lejay, Karol Szumi\l{}o, Ben Ward, and the members of the Centre of Australian Category Theory for useful input and suggestions.

\section{\texorpdfstring{Adjoint strings $(L,F,R)$ and right-induced model category structures}{Adjoint strings (L,F,R) and right-induced model category structures}}

A model category $\mathcal{M}$ is called \emph{cofibrantly generated} (see~\cite{Hirschhorn:MCL,Hovey:MC}) if there exist sets of maps $I$ and $J$ so that the class of fibrations of $\mathcal{M}$ is $\rlp{J}$, i.e., the class of maps having the right lifting property with respect to every element of $J$, and the class of acyclic fibrations of $\mathcal{M}$ is $\rlp{I}$.
If $K$ is a set of maps, we write $K\text{-cof} = \llp{(\rlp{K})}$ for those maps having the left lifting property with respect to every map in $\rlp{K}$; in a cofibrantly generated model category, $I\text{-cof}$ is the class of cofibrations and $J\text{-cof}$ is the class of acyclic cofibrations.
We write $K\text{-cell}$ for the closure of $K$ under pushout and transfinite composition.
We will use a quadruple $(\mathcal{M}, \mathcal{W}, I, J)$ to refer to such a cofibrantly generated model category, which allows us to emphasize $\mathcal{W}$, the subcategory of weak equivalences, in addition to the generating sets $I, J$.

Throughout, we will say that a functor $F : \mathcal{N} \to \mathcal{M}$ between model categories \emph{creates weak equivalences} if, for each morphism $f$ of $\mathcal{N}$, we have that $f$ is a weak equivalence in $\mathcal{N}$ if and only if $F(f)$ is a weak equivalence in $\mathcal{M}$.
Similarly, we could discuss when such a functor creates fibrations or cofibrations.

\begin{definition}
Let $F:\mathcal{N}\to\mathcal{M}$ be a functor between model categories. 
If $F$ creates weak equivalences and fibrations then we call the model structure on $\mathcal{N}$ \emph{right-induced} (leaving $F$ implicit). 
We also sometimes say the model structure is \emph{lifted} from $\mathcal{M}$ or along $F$.
\end{definition}

\begin{lemma}\label{lemma: adjoint string, sharp condition}
Suppose that $(\mathcal{M}, \mathcal{W},I,J)$ is a cofibrantly generated model category and $\mathcal{N}$ is a bicomplete category. Let $F:\mathcal{N}\to \mathcal{M}$ be a functor which is right adjoint to $L$ and left adjoint to $R$. Suppose that $FL$ preserves acyclic cofibrations and $LI$ permits the small object argument.

Then $(\mathcal{N},F^{-1}\mathcal{W}, LI,LJ)$ is a right-induced cofibrantly generated model category structure.
\end{lemma}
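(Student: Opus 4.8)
The plan is to recognize this as an instance of the classical transfer (or ``lifting'') theorem for cofibrantly generated model categories, often attributed to Kan, and to observe that the hypotheses as stated supply exactly what that theorem requires (see, for instance,~\cite{Hirschhorn:MCL}). Since $L$ is left adjoint to $F$, the standard correspondence between lifting properties across an adjunction gives $\rlp{(LK)} = F^{-1}\bigl(\rlp{K}\bigr)$ for every set of maps $K$ of $\mathcal{M}$; taking $K = J$ and then $K = I$ shows that $F$ creates the class $\rlp{(LJ)}$ from the fibrations of $\mathcal{M}$ and the class $\rlp{(LI)}$ from the acyclic fibrations of $\mathcal{M}$. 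I would then declare the weak equivalences of $\mathcal{N}$ to be $F^{-1}\mathcal{W}$, the fibrations to be $\rlp{(LJ)}$, and the cofibrations to be $(LI)\text{-cof}$. Two-out-of-three and closure under retracts for $F^{-1}\mathcal{W}$ are inherited from $\mathcal{M}$, and $\mathcal{N}$ is bicomplete by hypothesis, so the remaining work is to produce the two factorizations and to verify the identity $(LI)\text{-cof} \cap F^{-1}\mathcal{W} = (LJ)\text{-cof}$, which together with the displayed adjunction correspondence yields both lifting axioms.

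The factorizations come from the small object argument: on $LI$ it is permitted by hypothesis and produces a factorization into a relative $(LI)$-cell complex followed by a map in $\rlp{(LI)}$, while for the trivial-cofibration/fibration factorization one needs the small object argument for $LJ$. This last point is the only piece of bookkeeping not handed to us directly, and I would settle it by unwinding the adjunction: a domain of $LJ$ has the form $Ld$ with $d$ a domain of $J$, and since $\mathcal{N}(Ld,-) \cong \mathcal{M}(d,F-)$ and $F$ preserves colimits (it is left adjoint to $R$), smallness of $Ld$ relative to relative $(LJ)$-cell complexes reduces to smallness of $d$ relative to their images under $F$, namely relative $(FLJ)$-cell complexes; as explained in the next paragraph these are acyclic cofibrations of $\mathcal{M}$, so the smallness needed is already available in the cofibrantly generated category $\mathcal{M}$.

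The essential step is to show that every relative $(LJ)$-cell complex lies in $F^{-1}\mathcal{W}$. Since $F$ preserves colimits it carries such a complex to a relative $(FLJ)$-cell complex; since $FL$ preserves acyclic cofibrations, every map in $FLJ$ is an acyclic cofibration of $\mathcal{M}$, and acyclic cofibrations are closed under pushout and transfinite composition, so the image is an acyclic cofibration, in particular a weak equivalence. Thus relative $(LJ)$-cell complexes lie in $F^{-1}\mathcal{W}$, and since every map of $(LJ)\text{-cof}$ is a retract of such a complex and $F^{-1}\mathcal{W}$ is closed under retracts, $(LJ)\text{-cof} \subseteq (LI)\text{-cof} \cap F^{-1}\mathcal{W}$---the inclusion into $(LI)\text{-cof}$ holding because $\rlp{(LI)} \subseteq \rlp{(LJ)}$. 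For the reverse inclusion I would run the usual retract argument: given $f \in (LI)\text{-cof} \cap F^{-1}\mathcal{W}$, factor $f = p\circ i$ with $i$ a relative $(LJ)$-cell complex and $p \in \rlp{(LJ)}$; then $p$ is a weak equivalence by two-out-of-three, so $Fp$ is an acyclic fibration of $\mathcal{M}$, whence $Fp \in \rlp{I}$ and $p \in \rlp{(LI)}$; since $f$ has the left lifting property against $p$, a lift exhibits $f$ as a retract of $i$, and so $f \in (LJ)\text{-cof}$. The main obstacle here is organizational rather than a single hard estimate; the two points deserving care are the verification that $LJ$ permits the small object argument when only $LI$ is assumed to, and the acyclicity step just described---the latter is the crux, since it is exactly there that $F$ preserving colimits (because it has the right adjoint $R$) and $FL$ preserving acyclic cofibrations combine to force $F$ to send $(LJ)$-cell complexes into $\mathcal{W}$.
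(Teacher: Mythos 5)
Your proposal is correct and takes essentially the same route as the paper: the two substantive verifications are identical, namely that $F$, being a left adjoint, carries relative $LJ$-cell complexes to relative $FLJ$-cell complexes, which are acyclic cofibrations (hence weak equivalences) because $FL$ preserves acyclic cofibrations, and that smallness of the domains of $LJ$ transfers across the adjunction because $FLJ\text{-cell}\subseteq J\text{-cof}$. The only difference is presentational --- you unpack the classical transfer theorem (retract argument and all) where the paper cites~\cite[Theorem 11.3.2]{Hirschhorn:MCL} as a black box, and the one step you gloss over, upgrading smallness of the domains of $J$ relative to $J\text{-cell}$ to smallness relative to acyclic cofibrations, is exactly~\cite[Theorem 10.5.27]{Hirschhorn:MCL}, which the paper invokes explicitly.
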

\begin{proof}
The standard criterion for the existence of a right-induced model structure (e.g.,~\cite[Theorem 11.3.2]{Hirschhorn:MCL}) takes as given a cofibrantly generated model category $(\mathcal{M},\mathcal{W},I,J)$, a bicomplete category $\mathcal{N}$, and an adjoint pair $L:\mathcal{M}\rightleftarrows\mathcal{N}:F$. The theorem says that a right-induced cofibrantly generated model category as in the statement of the lemma exists if maps in $F(LJ\text{-cell})$ are weak equivalences and the sets $LI$ and $LJ$ permit the small object argument.

To see that $F(LJ\text{-cell})$ is in $\mathcal{W}$, since $F$ is a left adjoint, $F(LJ\text{-cell})\subset FLJ\text{-cell}$. Maps in $FLJ$ are acyclic cofibrations by our assumption on $FL$. Since acyclic cofibrations are closed under relative cell complexes, any map in $FLJ\text{-cell}$ is an acyclic cofibration and in particular a weak equivalence.

For the small object argument for $LJ$, let $x$ be the domain of a morphism in $J$. By~\cite[Theorem 10.5.27]{Hirschhorn:MCL}, since $x$ is small relative to $J$, it is $\kappa$-small relative to $J\text{-cof}$ for some $\kappa$. By assumption, $FLJ\subset J\text{-cof}$. Let $z_\beta$ be a $\kappa$-filtered diagram in $LJ\text{-cell}$. Then since $F$ is a left adjoint, $F(z_\beta \to z_{\beta+1})$ is in $FLJ\text{-cell} \subseteq J\text{-cof}$. 
Commutativity of the following diagram shows that $Lx$ is $\kappa$-small with respect to $LJ$.
\[
\begin{tikzcd}
\colim\mathcal{C}(Lx, z_\beta)\ar[rr]\ar[d,"\text{(adjunction) }\wr"']&& \mathcal{C}(Lx,\colim z_\beta)\ar[d,"\wr\text{ (adjunction)}"]\\
\colim{\mathcal{M}}(x,Fz_\beta)\ar[rd,"\sim","\text{(}\kappa\text{-smallness)}"']
&&
\mathcal{M}(x,F\colim z_\beta).\\
&{\mathcal{M}}(x,\colim Fz_\beta)\ar[ur,"\sim","\text{(}F\text{ is a left adjoint)}"']
\end{tikzcd}
\]
\end{proof}
By the usual model category arguments, instead of verifying that $FL$ preserves acyclic cofibrations, one may prove that $FR$ preserves fibrations.

A ``cleaner'' criterion with a more restrictive hypothesis is the following.
\begin{theorem}\label{theorem: adjoint string, Quillen condition}
Suppose that $(\mathcal{M}, \mathcal{W},I,J)$ is a cofibrantly generated model category and $\mathcal{N}$ is a bicomplete category. Let $F:\mathcal{N}\to \mathcal{M}$ be a functor which is right adjoint to $L$ and left adjoint to $R$. 
If $(FL,FR)$ is a Quillen adjunction, then there exists a right-induced cofibrantly generated model category structure $(\mathcal{N},F^{-1}\mathcal{W}, LI,LJ)$. Both $(L,F)$ and $(F,R)$ are Quillen adjunctions.
\end{theorem}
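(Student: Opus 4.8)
The plan is to deduce Theorem~\ref{theorem: adjoint string, Quillen condition} from Lemma~\ref{lemma: adjoint string, sharp condition}, since the two have nearly identical conclusions. Concretely, I would check that the Quillen adjunction hypothesis on $(FL,FR)$ implies both hypotheses of the lemma: first, that $FL$ preserves acyclic cofibrations, and second, that $LI$ permits the small object argument. The first is immediate: a left Quillen functor preserves cofibrations and acyclic cofibrations by definition, and $FL$ is the left adjoint in the Quillen adjunction $(FL,FR)$, so it preserves acyclic cofibrations (and in fact also ordinary cofibrations, which we will use in a moment). Applying the lemma then produces the right-induced cofibrantly generated model structure $(\mathcal{N}, F^{-1}\mathcal{W}, LI, LJ)$, and the parenthetical remark after the lemma's proof lets us verify the $FR$-version of the hypothesis instead if that is more convenient.

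The step requiring a little more care is the smallness condition: Lemma~\ref{lemma: adjoint string, sharp condition} asks that $LI$ permit the small object argument, whereas the Quillen hypothesis does not mention $L$ directly. Here I would argue exactly as in the proof of the lemma's small object argument for $LJ$, but now using that $FL$ preserves \emph{cofibrations}: if $x$ is the domain of a map in $I$, then $x$ is $\kappa$-small relative to $I\text{-cof}$ for some $\kappa$ by~\cite[Theorem 10.5.27]{Hirschhorn:MCL}, and $FLI \subseteq I\text{-cof}$ since $FL$ is left Quillen; transfinite compositions of pushouts of maps in $LI$ are carried by the left adjoint $F$ into $FLI\text{-cell} \subseteq I\text{-cof}$, and the same adjunction/smallness diagram as in the lemma shows $Lx$ is $\kappa$-small relative to $LI$. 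Thus both generating sets permit the small object argument. (Alternatively, since $J\text{-cof} \subseteq I\text{-cof}$ one could try to piggyback on the lemma's argument, but spelling out the $I$ case directly is cleanest.)

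Finally, I must justify the last two sentences: that $(L,F)$ and $(F,R)$ are Quillen adjunctions for the new model structure. For $(F,R)$: by construction $F$ creates weak equivalences and fibrations, hence $F$ preserves fibrations and acyclic fibrations, so $F$ is right Quillen and $(F,R)$ is a Quillen adjunction. For $(L,F)$: since $F$ is right Quillen it is automatic that its left adjoint $L$ is left Quillen, so $(L,F)$ is a Quillen adjunction as well. One can double-check the latter directly from the explicit generating sets: $L$ sends the generators $I$ (resp.\ $J$) of $\mathcal{M}$ to the generators $LI$ (resp.\ $LJ$) of $\mathcal{N}$, hence sends cofibrations to cofibrations and acyclic cofibrations to acyclic cofibrations.

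I do not expect any genuine obstacle here; the theorem is essentially a streamlined corollary of the lemma, and the only thing to watch is that the hypothesis on $(FL,FR)$ is used in two slightly different ways — preservation of acyclic cofibrations (for the model structure's existence via the lemma) and preservation of cofibrations (to get the small object argument for $LI$ rather than just $LJ$). If anything is mildly delicate, it is confirming that the generating sets coming out of the lemma are literally $LI$ and $LJ$, so that the final sentence about $(L,F)$ being Quillen can be read off the generators; but this is exactly what the lemma's statement provides.
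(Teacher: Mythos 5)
Your argument for the existence of the model structure matches the paper's: use the left Quillen hypothesis on $FL$ to get preservation of acyclic cofibrations, rerun the lemma's smallness argument for $LI$ using that $FL$ also preserves cofibrations (so $FLI \subseteq I\text{-cof}$), and invoke Lemma~\ref{lemma: adjoint string, sharp condition}. That part is correct and is essentially identical to what the paper does.

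There is, however, a genuine gap in the last step: you never actually prove that $(F,R)$ is a Quillen adjunction. In the pair $(F,R)$ the functor $F$ is the \emph{left} adjoint, so what must be shown is that $F$ is left Quillen, equivalently that $R$ preserves fibrations and acyclic fibrations. Your paragraph labelled ``For $(F,R)$'' instead observes that $F$ preserves fibrations and acyclic fibrations, which makes $F$ \emph{right} Quillen --- that is exactly the statement that $(L,F)$ is a Quillen adjunction, and it is the only content of both of your final paragraphs. The missing argument is the one place where the right Quillen half of the hypothesis on $(FL,FR)$ gets used: given a fibration $f$ in $\mathcal{M}$, the map $FR(f)$ is a fibration because $FR$ is right Quillen, and since $F$ creates (hence reflects) fibrations in the right-induced structure, $R(f)$ is a fibration in $\mathcal{N}$; the same reasoning with acyclic fibrations shows $R$ is right Quillen. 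Without this step the final sentence of the theorem is only half proved.
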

\begin{proof}
Since the functor $FL$ is left Quillen, it preserves acyclic cofibrations. The argument used in the proof of Lemma~\ref{lemma: adjoint string, sharp condition} for $LJ$ shows that $LI$ permits the small object argument.

To see that $F$ is left Quillen, let $f$ be a fibration. Since $FR$ is right Quillen, $FR(f)$ is a fibration in $\mathcal{M}$. Since $F$ reflects fibrations, $Rf$ is a fibration in $\mathcal{N}$. 
The same argument applies when $f$ is an acyclic fibration, hence $R$ is right Quillen.
\end{proof}

Suppose we are in the setting of Lemma~\ref{lemma: adjoint string, sharp condition} where $(F,R)$ is not necessarily a Quillen adjunction but we know that $FR$ preserves fibrations. Then the argument in the second paragraph in this proof still applies to show that $R$ preserves fibrations and $F$ preserves acyclic cofibrations.

It is standard that any right-induced model structure over a right proper base is also right proper. The following proposition indicates that we are working in a very special setting.

\begin{proposition}\label{prop: left proper}
In the context of Theorem~\ref{theorem: adjoint string, Quillen condition}, if $\mathcal{M}$ is left proper, so is $\mathcal{N}$.
\end{proposition}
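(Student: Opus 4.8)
The plan is to transport the relevant pushout square forward along $F$ and appeal to left properness of $\mathcal{M}$. The key structural observation --- and the reason this works in the ``very special setting'' of Theorem~\ref{theorem: adjoint string, Quillen condition} --- is that here $F$ simultaneously enjoys two properties that rarely coexist: it is a \emph{left} adjoint (it has a right adjoint $R$), so it preserves all colimits, in particular pushouts; and by Theorem~\ref{theorem: adjoint string, Quillen condition} it is \emph{left} Quillen, so it preserves cofibrations. Moreover $F$ preserves weak equivalences, since the model structure on $\mathcal{N}$ was right-induced, so that $F$ creates (in particular reflects) weak equivalences.

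Given these facts the argument is immediate. Start with a pushout square
\[
\begin{tikzcd}
A\ar[r,"f"]\ar[d,"i"'] & B\ar[d]\\ C\ar[r] & D
\end{tikzcd}
\]
in $\mathcal{N}$ with $i$ a cofibration and $f$ a weak equivalence; we must show the bottom map $C\to D$ is a weak equivalence. Applying $F$ yields a pushout square in $\mathcal{M}$ whose left-hand map $Fi$ is a cofibration and whose top map $Ff$ is a weak equivalence. Left properness of $\mathcal{M}$ then gives that $FC\to FD$ is a weak equivalence in $\mathcal{M}$, and since $F$ reflects weak equivalences, $C\to D$ is a weak equivalence in $\mathcal{N}$.

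I do not expect any real obstacle: once one has recorded that $F$ is both a left adjoint and left Quillen (the latter being part of the proof of Theorem~\ref{theorem: adjoint string, Quillen condition}), there is nothing further to verify. I would only add the remark that the proof uses less than the full hypothesis --- it needs only that $F$ has a right adjoint and preserves cofibrations and weak equivalences --- so the same conclusion holds in the setting of Lemma~\ref{lemma: adjoint string, sharp condition} whenever one additionally knows that $F$ preserves all cofibrations, equivalently (in the notation there) whenever $FL$ preserves cofibrations and not merely acyclic cofibrations.
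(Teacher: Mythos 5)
Your proof is correct and is exactly the paper's argument: the paper's one-line proof simply records that $F$ reflects weak equivalences and preserves pushouts (being a left adjoint), weak equivalences, and cofibrations, which is precisely the set of facts you assemble before transporting the pushout square to $\mathcal{M}$. Your closing remark about the weaker hypotheses needed is also accurate.
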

\begin{proof}
The functor $F$ reflects weak equivalences and preserves pushouts, weak equivalences, and cofibrations.	
\end{proof}

\begin{remark}
A natural question is whether the criteria of Lemma~\ref{lemma: adjoint string, sharp condition} and Theorem~\ref{theorem: adjoint string, Quillen condition} have dual versions for \emph{left}-induced model category structures 
(studied, e.g., in~\cite{BayehHessKarpovaKedziorekRiehlShipley:LIMSDC,HessKedziorekRiehlShipley:NSCIMS}). 
That is, given a functor $F:\mathcal{N}\to \mathcal{M}$ with both a left adjoint $L$ and a right adjoint $R$, can one make a model category where the weak equivalences and \emph{cofibrations} are created by $F$?
Our methods do not seem to apply in this case.
Typically for the existence of an induced model structure one needs an acyclicity condition and factorizations. 
In the right-induced case the cofibrations whose acyclicity must be tested are $LJ$-cellular, where $J$ is a class of maps in $\mathcal{M}$.
The fundamental property of $F$ that we use is that it takes relative $LJ$-cell complexes to relative $FLJ$-cell complexes.
In the left-induced case, one has two choices. 
One could try to work with a dual condition to cellularity (a Postnikov tower), in which case there is not a general way to build the desired factorizations. 
Alternatively, one can use arguments that show the existence of factorizations~\cite{MakkaiRosicky:CC,BourkeGarner:AWFSI}; 
in this case the fibrations whose acyclicity must be tested are not necessarily $R\mathcal{Z}$-Postnikov for any class $\mathcal{Z}$ of maps in $\mathcal{M}$.
\end{remark}

Here we give a few examples of the application of Theorem~\ref{theorem: adjoint string, Quillen condition} recovering some known model structures and demonstrating some limitations of this tool.

\subsection{Groupoids and categories} 
The category $\cat$ of small categories and functors between them admits a model structure, called the canonical model structure, where the weak equivalences are the categorical equivalences, the cofibrations are the injective-on-objects functors, and the fibrations are the isofibrations~\cite{JoyalTierney:SSCS}.
This model structure is cofibrantly generated~\cite{Rezk:MCC},\cite[Example 1.1]{Lack:QMS2C}.
The inclusion $F$ of groupoids into $\cat$ has both a left adjoint $L$ and a right adjoint $R$ (this is standard; see, e.g.,~\cite[Example 4.1.15]{Riehl:CTC} for an explicit description of the adjoints). It is a direct observation that $FL$ preserves cofibrations and that $FR$ preserves fibrations, so the canonical model structure on groupoids is right-induced. This example is ahistorical: the existence of the canonical model structure on groupoids predates that for categories in the literature~\cite{Anderson:FGR} (moreover, showing the existence of the structure uses very similar arguments for $\cat$ and for $\gpd$). 

\subsection{Categories with anti-involution}\label{section categories with anti involution}
Let $\catinv$ be the category whose objects are small categories $X$ together with a functor $\tau: X^{\op} \to X$ so that $\tau^{\op} \tau = \id_{X^{\op}}$. Morphisms are functors satisfying $\tau f^{\op} = f \tau$. 

The forgetful functor $F: \catinv \to \cat$ has both a left adjoint $L$ and a right adjoint $R$. 
The left adjoint takes $X$ to the category $X\amalg X^{\op}$ equipped with the swap map
\[ [X \amalg X^{\op}]^{\op} = X^{\op} \amalg X \to X \amalg X^{\op} = FL(X)\]
and the right adjoint takes $X$ to the category $X\times X^{\op}$ with the swap map
\[ [X \times X^{\op}]^{\op} = X^{\op} \times X \to X \times X^{\op} = FR(X).\]
To see that $L$ is left adjoint to $F$, notice that
\begin{align*} \hom(L(X),(Y, \tau))  &= \{ f \amalg g : X \amalg X^{\op} \to Y \,|\, \tau f^{\op} = g \} \\ &= \{ f \amalg \tau f^{\op} : X \amalg X^{\op} \to Y \} \cong \hom(X, Y). \end{align*}
A similar argument shows that $R$ is right adjoint to $F$.

To see that the adjoint string $L \dashv F \dashv R$ fits into our framework and endows $\catinv$ with a right-induced model structure, we simply note that $FL(X) = X \amalg X^{\op}$ preserves the cofibrations and every equivalence of categories. Thus $FL$ is left Quillen.

\begin{remark}
The functor $FL$ is a 2-monad on $\cat$, and the 2-category of $FL$-algebras admits a $\cat$-enriched model structure by~\cite[Theorem 5.5]{Lack:HTA2M}. The underlying 1-category of this 2-category is just $\catinv$, and the underlying ordinary model structure is the one guaranteed by Theorem~\ref{theorem: adjoint string, Quillen condition}.
\end{remark}

\begin{exercise}
	The cofibrations in $\catinv$ are those maps $f: (X, \tau') \to (Y,\tau)$ so that $f$ is injective on objects and $\tau$ acts freely on $(\ob\,Y)\setminus f(\ob\,X)$.
\end{exercise}

\begin{remark}
The category $\catinv$ describes categories with strict anti-involution. 
One might wonder about categories with strong ``anti-involution,'' where the composition $\tau^{\op}\tau$ is only naturally isomorphic to the identity. 
There is a functorial strictification of strong anti-involutive categories (see, for instance,~\cite[\S 3]{Dotto:ECFBZ2ARAK}) so it is reasonable to study the strict version in lieu of the strong version.
\end{remark}

\subsection{Dagger categories}
Dagger categories are categories equipped with anti-involution $\tau$ which is the identity on objects. Bunke proved~\cite{Bunke:HTSC} that there is a model structure on $\dcat$ with weak equivalences the \emph{dagger equivalences}; this agrees with the underlying ordinary model structure of the $\cat$-model structure on $\dcat$ guaranteed by~\cite[Theorem 4.3]{Lack:HTA2M}.

It turns out that $\dcat$ is both reflective and coreflective in $\catinv$.
Let us give an indication of the adjoints $L,R: \catinv \to \dcat$.
The functor $\tau$ induces an automorphism $\ob\,\tau$ of $\ob\,X$. 
The object set of the underlying category of $L(X,\tau)$ is the set of orbits $(\ob\,X)/(\ob\,\tau)$ (we will not need to describe the morphisms or involution explicitly).
The underlying category of $R(X,\tau)$ is the full subcategory with objects the fixed-points of $\ob\,\tau : \ob\,X \to \ob\,X$. 
The anti-involution on morphisms is then the restriction of $\tau$ to this full subcategory.
The counit map $R(X,\tau) \to (X,\tau)$ is the full subcategory inclusion. 
That these maps are adjoints follows, essentially, from Theorem 3.1.17 and Theorem 3.1.19 of~\cite{Heunen:CQML}.

The functor $FR$ does not preserve fibrations. Consider the categories $X$ and $Y$ with $\ob\,X = \{x, x', y\}$, $\ob\,Y = \{z,y\}$, and $\hom(a,b) = *$ for any pair of objects $a,b$ in $X$ or in $Y$.
Consider these categories as objects in $\catinv$ so that the anti-involutions fix $y$, swap $x$ and $x'$, and fix $z$.
The functor $p: X \to Y$ with $p(x) = p(x') = z$ and $p(y) = y$ is an isofibration, but $RX = \{y\} \to RY = Y$ is not an isofibration.

Since $FR$ does not preserve fibrations, $FL$ does not preserve acyclic cofibrations and Lemma~\ref{lemma: adjoint string, sharp condition} does not apply.

We mention in passing that if we endow $\dcat$ with the Bunke model structure, then $\dcat \hookrightarrow \catinv$ is neither right nor left Quillen.
Since only the initial dagger category is cofibrant as an object in $\catinv$, the inclusion cannot be left Quillen.
The inclusion is not right Quillen because the forgetful functor $\dcat \to \cat$ is not: 
if $X$ is any dagger category, the inclusion of the subcategory of unitary isomorphisms is a fibration in $\dcat$; it is only a fibration in $\cat$ if every isomorphism in $X$ is unitary.

\subsection{Simplicial categories with anti-involution}\label{section: simpcat}
Let $\simpcat$ denote the category of simplicially-enriched (small) categories, equipped with the Bergner model structure~\cite{Bergner:MCSCSC}, which blends together the canonical model structure on $\cat$ with the Kan--Quillen model structure on $\sset$, the category of simplicial sets. In this model structure, a functor $f: X \to Y$ is a weak equivalence (resp. fibration) if
\begin{itemize}
	\item the functor $\pi_0(f) : \pi_0(X) \to \pi_0(Y)$ is an equivalence (resp. isofibration) of ordinary categories 
	\item for each pair of objects $c,c'$, the map $X(c,c') \to Y(fc,fc')$ of simplicial sets is a weak equivalence (resp. fibration) in the Kan--Quillen model structure.
\end{itemize}
These weak equivalences are called \emph{Dwyer--Kan equivalences}~\cite{DwyerKan:FCHA}.

An \emph{anti-involutive simplicial category} is a simplicial category $X$ together with a functor
$\tau: X^{\op} \to X$ (where $X^{\op}$ denotes the simplicial category with the same objects and $X^{\op}(A,B)=X(B,A)$) so that $\tau^{\op} \tau = \id_{X^{\op}}$. 
We will write $\simpcatinv$ for the collection of such objects together with morphisms those simplicial functors satisfying $\tau f^{\op} = f \tau$.

The forgetful functor $F: \simpcatinv \to \simpcat$ has both a left adjoint $L$ and a right adjoint $R$, given by the same formulas as in Section~\ref{section categories with anti involution}. 
Cofibrations in $\simpcat$ are not as simple as they are in $\cat$. 
However, it is straightforward to show that $FR$ preserves (acyclic) fibrations of simplicial categories and thus is right Quillen. 
Thus, by Theorem~\ref{theorem: adjoint string, Quillen condition}, $\simpcatinv$ admits a model structure lifted along $F$.

\subsection{Chain complexes}
Let $f:R\to S$ be a morphism of rings. Then there is a pullback functor $f^*$ from $S$-modules to $R$-modules which has both a left adjoint $f^*(S)\otimes_R(-)$ and a right adjoint $\mor_{R\text{-mod}}(f^*(S),-)$~\cite[A.5.2.2(e)]{Eisenbud:CAWVTAG}.
These functors and adjunctions upgrade to adjunctions between
chain complexes of $S$-modules (henceforth ``$S$-chain complexes'') and $R$-chain complexes, by applying each functor degreewise. 
The category of $R$-chain complexes supports projective and injective model structures, both of which are cofibrantly generated~\cite[Theorems 2.3.11, 2.3.13]{Hovey:MC}. In both cases the weak equivalences are quasi-isomorphisms. In the projective model structure, the fibrations are degreewise epimorphisms and in the injective model structure the cofibrations are degreewise monomorphisms.

Suppose that $f^*S$ is $R$-projective. Then $\mor_{R\text{-mod}}(f^*S,-)$ is an exact functor and so $f^*(\mor_{R\text{-mod}}(f^*S,-))$ preserves fibrations and weak equivalences in the projective model structure and thus is right Quillen. We conclude that the projective model structure lifts along $f^*$ to $S$-chain complexes. Because epimorphisms in $S$-chain complexes and $R$-chain complexes are both calculated in sets, and weak equivalences in both are calculated in $\mathbb{Z}$-chain complexes, the right-induced model structure is the projective model structure on $S$-chain complexes.

This example is a bit silly because we already knew that $S$-chain complexes support a projective model structure and it is well-known (c.f.~\cite[end of section~2.3]{Hovey:MC}) that $f^*$ is right Quillen. Indeed, even without the condition that $f^*S$ be a projective $R$-module, the functor $f^*$ is right Quillen and creates both fibrations and weak equivalences. This is evidence that Theorem~\ref{theorem: adjoint string, Quillen condition} is fairly weak.

A less familiar example comes from the injective model structure.
Suppose that $f^*S$ is $R$-flat. Then the functor $f^*(S) \otimes_R (-) $ is exact which implies that the functor $f^*(f^*(S)\otimes_R (-))$ preserves cofibrations and weak equivalences in the injective model structure on $R$-chain complexes and so is left Quillen. We conclude that the injective model structure lifts along $f^*$ to $S$-chain complexes.

However, the resulting model structure on $S$-chain complexes is not in general the injective model structure. The fibrations in the injective model structure are the epimorphisms with fibrant kernel. The fibrant objects in the injective structure on $S$-chain complexes are a subclass of complexes of $S$-injective chain complexes containing the bounded complexes. However, a map in the right-induced structure is a fibration if and only if it is an epimorphism whose kernel is fibrant in the $R$-injective model structure.

\subsection{Non-negatively graded cochain complexes}
An example where the hypotheses of Theorem~\ref{theorem: adjoint string, Quillen condition} are not satisfied but Lemma~\ref{lemma: adjoint string, sharp condition} is useful is the following. Let $F$ be the inclusion of non-negatively graded cochain complexes into unbounded cochain complexes. 
The functor $F$ has adjoints on both sides.
The right adjoint $R$ naively truncates by discarding components in negative degree and keeping components in non-negative degree.
The left adjoint $L$ truncates ``homotopically'' --- it discards components in negative degree, keeps components in positive degree, but in degree zero $L$ takes the cokernel of the differential from degree $-1$ to degree $0$.
We would like to lift the projective model structure from unbounded cochain complexes to non-negatively graded cochain complexes along $F$ (that this is possible is well-known). 

However, in order to use Theorem~\ref{theorem: adjoint string, Quillen condition} to prove this fact would require $FR$ to be right Quillen. This is false for any nonzero ring. For example, consider the unbounded cochain complex $C$ which is the ground ring in degrees $0$ and $-1$ with the identity map as differential. The map from $C$ to $0$ is an acyclic fibration but applying $FR$ to it gives the map from the ground ring to zero.

On the other hand, in order to use Lemma~\ref{lemma: adjoint string, sharp condition} we need only verify a smallness condition and that $FR$ preserves fibrations (degreewise epimorphisms). Every object in cochain complexes is small. The functor $FR$ merely discards all modules in negative degree and so preserves degreewise epimorphisms.
\subsection*{Operads and their variations}
We conclude this section with two examples about operad-like structures. Our examples relate operads~\cite{May:GILS}, cyclic operads~\cite{GetzlerKapranov:COCH}, and modular operads~\cite{GetzlerKapranov:MO}. We will follow the conventions of the book~\cite{MarklShniderStasheff:OATP}.  Readers unfamiliar with operads may wish to skip to Section~\ref{sec: functor categories}.

\subsection{Cyclic operads and operads}
Operads are themselves algebras for an $\mathbb{N}$-colored operad $\mathbb{O}$ of rooted planar trees (see~\cite[1.5.6]{BergerMoerdijk:RCORHA}).
There is an $\mathbb{N}$-colored operad $\mathbb{C}$ (in $\set$) whose algebras are cyclic operads (see~\cite[2.3.1]{KaufmannWard:FC} coupled with~\cite{BataninKockWeber:RPSFCO}; or a variation of~\cite[14.1.1]{YauJohnson:FPAM}). 
The elements of $\mathbb{C}$ are unrooted planar trees.
Forgetting the root gives a map $\mathbb{O} \to \mathbb{C}$, which induces the forgetful functor from cyclic operads to operads.
As is always the case with maps of colored operads, there is a left adjoint to this forgetful functor.
The forgetful functor also has a right adjoint.
Existence of this right adjoint is apparently due to Templeton in his unpublished thesis~\cite{Templeton:SGO} and was rediscovered independently by Ward~\cite{Ward:6OFGO}. Ward gives an explicit construction in the category of chain complexes over a field and an argument that works for some other ground categories. We record the result in the general case.

\begin{lemma}\label{lemma cyclic right adjoint}
Suppose $\mathcal{E}$ is a symmetric monoidal category with finite limits. Then the forgetful functor from cyclic operads in $\mathcal{E}$ to operads in $\mathcal{E}$ admits a right adjoint $R$ with the underlying $\mathbb{N}$-module
\[ RP(n) = \prod_{i=0}^n P(n).\]	
\end{lemma}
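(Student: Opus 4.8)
Recall from~\cite{MarklShniderStasheff:OATP} that a cyclic operad in $\mathcal{E}$ is precisely an operad $\{Q(n)\}$ together with, for each $n$, an extension of the $\Sigma_n$-action on $Q(n)$ to a $\Sigma_{n+1}$-action (generated by an order-$(n+1)$ rotation $\tau_n$), subject to unitality and to the standard compatibility between $\tau$ and the partial compositions (a rotation turns a $\circ_i$ into a $\circ_j$). The forgetful functor $F$ discards the extra part of each $\Sigma_{n+1}$-action, so on underlying $\mathbb{N}$-modules it is levelwise restriction along $\Sigma_n \hookrightarrow \Sigma_{n+1}$. The right adjoint to that restriction is levelwise coinduction; as $\Sigma_{n+1}/\Sigma_n$ has $n+1$ elements, the coinduced object in arity $n$ is $\prod_{i=0}^n P(n)$, with $\Sigma_{n+1}$ permuting the factors (twisted by the residual $\Sigma_n$-actions). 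This explains the formula, and reduces the lemma to producing a cyclic operad structure on this $\mathbb{N}$-module which realizes the right adjoint at the level of operads.

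First I would define $RP$ by hand. Put $RP(n) = \prod_{i=0}^n P(n)$ (a finite product, hence available in $\mathcal{E}$) with the coinduced $\Sigma_{n+1}$-action, thinking of the $i$-th factor as the value of an operation re-rooted so that its $i$-th leg is the output. The unit is the unit of $P$ in the $i = 0$ factor and its rotations elsewhere, and the partial composition $\circ_j\colon RP(n)\otimes RP(m)\to RP(n+m-1)$ is defined one factor at a time: a choice of root for the grafted corolla determines rootings of the two constituent corollas and an induced composition index, and one applies the corresponding partial composition of $P$ to the appropriate factors. Everything is phrased via the universal property of the products (equivalently, on generalized elements), so a symmetric monoidal structure with finite limits suffices. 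Then I would check that $RP$ is a cyclic operad: associativity, equivariance, unitality, and the $\tau$-versus-$\circ_i$ axioms each reduce to the corresponding identity in $P$ after the tree bookkeeping is unwound.

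Next, the adjunction $\mor_{\mathbf{Op}}(FQ,P)\cong\mor_{\mathbf{CycOp}}(Q,RP)$. The counit $\varepsilon_P\colon FRP\to P$ is the projection onto the $i=0$ factor in each arity, which is a map of operads by construction. The unit $\eta_Q\colon Q\to RFQ$ sends an arity-$n$ operation to the tuple of its $n+1$ re-rootings, using the cyclic structure of $Q$; the assertion that $\eta_Q$ is equivariant and compatible with composition is exactly a repackaging of the cyclic operad axioms for $Q$. The two triangle identities then come down to the facts that re-rooting by $0$ is the identity and that re-rooting the $i$-th re-rooting recovers the $i$-th factor, after which the hom-set bijection is formal; naturality in $P$ is immediate.

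I expect the main obstacle to be the middle step: arranging the partial composition on $RP$ so that it is simultaneously a bona fide cyclic operad structure, compatible with the chosen coinduced $\Sigma_{n+1}$-actions, and set up so that $\varepsilon$ is an operad map and the triangle identities hold. The delicate point is matching the residual $\Sigma_n$-twists in the coinduced action to the re-rooting conventions; I would keep this under control by phrasing the construction in terms of the groupoid of corollas with a marked leg rather than in terms of explicit permutations, and only then translate back to the product description in the statement.
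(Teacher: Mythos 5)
Your proposal is correct and follows essentially the same route as the paper: both construct $RP(n)=\prod_{i=0}^{n}P(n)$ explicitly, equip it with partial compositions and an extended $\Sigma_{n+1}$-action determined factorwise by re-rooting considerations (the paper writes these out as case-by-case formulas in the projections $\pi_j$), and defer the verification of the cyclic operad axioms and the adjunction as routine. Your framing of the underlying module as coinduction along $\Sigma_n\hookrightarrow\Sigma_{n+1}$ is a pleasant motivation the paper omits, but it does not change the substance of the argument.
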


\begin{proof}
We describe the (cyclic) operad structure on $RP$.
For $1\le i\le m$, define $\circ_i : RP(m) \otimes RP(n) \to RP(m+n-1)$ by
\[
	\pi_j (\circ_i) = \begin{cases}
\circ_{i+j} (\pi_{j} \otimes \pi_0) &0\le j \le m-i
\\
\circ_{i+j-m} (\pi_{i+j-m} \otimes \pi_{m+1-i}) \tau &m-i < j\le m+n-i
\\
\circ_{i+j-m-n} (\pi_{j-n+1} \otimes \pi_0) & m+n-i< j<m+n.
\end{cases}
\]
where $\tau$ is the symmetry constraint of $\mathcal{E}$.

For the extended symmetric group action, if $\sigma \in \Sigma_{n+1} = \aut(\{0,1,\dots, n\})$ (thought of as acting on integers modulo $(n+1)$) and $0 \leq i \leq n$, let $\sigma_i \in \Sigma_{n} = \aut(\{1,\dots, n\})$ be given by $k\mapsto \sigma(k-i) - \sigma(n+1-i) \mod (n+1)$.
Using the $\Sigma_n$ action on $P(n)$, define $\sigma^* : RP(n) \to RP(n)$ by $\pi_i \sigma^* \coloneqq \sigma_i^*\pi_{n+1 - \sigma(n+1-i)}.$

The identity $\id_{RP}$ is $\id_{P} \times \id_P : \mathbf{1} \to P(1) \times P(1).$

Now the proof that $RP$ is a cyclic operad and that $R$ is right adjoint to the forgetful functor is tedious but straightforward. 
\end{proof}
Neglect of structure gives similar results for non-symmetric operads and non-unital Markl operads.

In light of~\cite[Theorem 2.1]{BergerMoerdijk:RCORHA}, the category of cyclic operads in a suitably nice symmetric monoidal model category $\mathcal{M}$ admits a model structure lifted along the forgetful functor to $\mathbb{N}$-modules. 
We can recover this result from the existence of a model structure on operads.

\begin{proposition}
\label{prop: cyclic operads}
Let $\mathcal{M}$ be a symmetric monoidal model category such that operads in $\mathcal{M}$ have a cofibrantly generated model category structure lifted from $\mathbb{N}$-modules in $\mathcal{M}$. Then cyclic operads in $\mathcal{M}$ have a model category structure lifted from operads (or $\mathbb{N}$-modules) in $\mathcal{M}$.
\end{proposition}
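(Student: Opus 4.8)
The plan is to apply Theorem~\ref{theorem: adjoint string, Quillen condition} to the forgetful functor $F$ from cyclic operads in $\mathcal{M}$ to operads in $\mathcal{M}$. We already have three of the four ingredients: the category of cyclic operads in $\mathcal{M}$ is bicomplete (limits and colimits of algebras over a colored operad in a bicomplete category exist, computed via the monadicity of the forgetful functor to $\mathbb{N}$-modules), the functor $F$ has a left adjoint $L$ because it is induced by a map of colored operads $\mathbb{O}\to\mathbb{C}$, and $F$ has a right adjoint $R$ by Lemma~\ref{lemma cyclic right adjoint} (taking $\mathcal{E}=\mathcal{M}$, which has finite limits). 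The operad model structure on $\mathcal{M}$ is cofibrantly generated by hypothesis. So the only thing left to check is that $(FL,FR)$ is a Quillen self-adjunction on operads in $\mathcal{M}$.

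Since fibrations and weak equivalences of operads in $\mathcal{M}$ are created by the forgetful functor to $\mathbb{N}$-modules — this is what ``lifted from $\mathbb{N}$-modules'' means — it suffices to show that $FR$ preserves fibrations and acyclic fibrations after passing to underlying $\mathbb{N}$-modules; equivalently, that the composite of $FR$ with the forgetful functor to $\mathbb{N}$-modules is right Quillen for the (right-induced) model structure on $\mathbb{N}$-modules, which is itself a diagram category on the discrete category $\mathbb{N}$ and hence has fibrations and weak equivalences computed levelwise. By Lemma~\ref{lemma cyclic right adjoint}, the underlying $\mathbb{N}$-module of $FRP$ in degree $n$ is the finite product $\prod_{i=0}^{n} P(n)$. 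Thus, on underlying $\mathbb{N}$-modules, $FR$ is, levelwise, a finite product of copies of the (level-$n$) identity functor — more precisely $(FRP)(n) = P(n)^{n+1}$. Finite products preserve fibrations and acyclic fibrations in any model category, so $FR$ takes (acyclic) fibrations of operads to maps which are levelwise (acyclic) fibrations in $\mathcal{M}$, i.e., to (acyclic) fibrations of operads. Hence $FR$ is right Quillen, so $(FL,FR)$ is a Quillen adjunction.

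With this in hand, Theorem~\ref{theorem: adjoint string, Quillen condition} produces the right-induced cofibrantly generated model structure $(\mathcal{N}, F^{-1}\mathcal{W}, LI, LJ)$ on cyclic operads, where $(\mathcal{M}_{\mathrm{op}},\mathcal{W},I,J)$ denotes the operad model structure; in particular the weak equivalences and fibrations of cyclic operads are those maps which become weak equivalences and fibrations of operads, equivalently of $\mathbb{N}$-modules, so the structure is also lifted from $\mathbb{N}$-modules. The theorem additionally gives that $(L,F)$ and $(F,R)$ are both Quillen adjunctions.

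I expect no genuine obstacle here — the content is entirely carried by the explicit formula in Lemma~\ref{lemma cyclic right adjoint}, which shows $FR$ is levelwise a finite power. The only point requiring a sentence of care is the bookkeeping that ``fibrations of operads are detected on $\mathbb{N}$-modules, and fibrations of $\mathbb{N}$-modules are detected levelwise in $\mathcal{M}$,'' so that checking the Quillen condition reduces to the trivial statement that finite products of (acyclic) fibrations in $\mathcal{M}$ are (acyclic) fibrations. One should also remark, as in the chain complex example, that because the forgetful functors to $\mathbb{N}$-modules create the weak equivalences on both sides, the induced structure on cyclic operads may equally be described as lifted directly from $\mathbb{N}$-modules, matching the structure obtained via~\cite[Theorem 2.1]{BergerMoerdijk:RCORHA}.
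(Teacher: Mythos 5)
Your proposal is correct and follows essentially the same route as the paper's own proof: apply Theorem~\ref{theorem: adjoint string, Quillen condition}, note that cyclic operads are bicomplete as algebras over a colored operad, and use the explicit formula $FR(f)(n)=\prod_{n+1} f(n)$ from Lemma~\ref{lemma cyclic right adjoint} together with the fact that (acyclic) fibrations of operads are detected levelwise in $\mathbb{N}$-modules, where products preserve them. Your write-up merely spells out the bookkeeping that the paper leaves implicit.
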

\begin{proof}
We use Theorem~\ref{theorem: adjoint string, Quillen condition} to lift the model structure on operads to cyclic operads. 
Cyclic operads, as algebras over a colored operad, are bicomplete. Because the model structure on operads is right-induced, fibrations and acyclic fibrations are calculated in $\mathbb{N}$-modules.
If $f$ is any map of cyclic operads, then the degree $n$ part is $FR(f)(n) = \prod_{n+1} f(n)$. As products preserve both fibrations and acyclic fibrations, if $f$ is an (acyclic) fibration then so is $FR(f)$.
\end{proof}

\subsection{Modular and cyclic operads} 
The functor $F$ from modular operads to cyclic operads (with some generality as to the ground category $\mathcal{M}$), which takes a modular operad to its genus zero part has a left adjoint $L$ (modular envelope) and a right adjoint $R$ (extension by the terminal object of $\mathcal{M}$)~\cite{Ward:6OFGO}. 

Suppose that cyclic operads in $\mathcal{M}$ support a cofibrantly generated model category structure. The compositions $FL$ and $FR$ are both the identity and therefore Theorem~\ref{theorem: adjoint string, Quillen condition} applies. Then there is a right-induced model structure on modular operads where weak equivalences are created in cyclic operads after forgetting higher genus operations.

One would usually prefer a model structure on modular operads where weak equivalences are created in $\mathcal{M}^{\mathbb{N}}$ by considering the coproduct over genera in each arity. 
The forgetful functor creating such weak equivalences factors through a \emph{different} forgetful functor to cyclic operads that forgets the information of the genus but not the higher genus operations themselves. However, this less-forgetful functor does not have a right adjoint and so our method does not apply in this case. This does not preclude the proof of existence of such a model structure by other methods. See, e.g.,~\cite[Proposition~10.3]{BataninBerger:HTAOPM}~and~\cite[Theorems~8.15~and~8.27]{KaufmannWard:FC}.

\section{Functor categories}
\label{sec: functor categories}

Let $\ground$ be a bicomplete category and $\mathcal{C}$ and $\mathcal{D}$ be small categories. Any functor $\iota:\mathcal{C}\to \mathcal{D}$ induces a functor $\iota^*$ between the functor categories $\ground^{\mathcal{D}}\to\ground^{\mathcal{C}}$. Bicompleteness of $\ground$ implies that $\iota^*$ has both a left adjoint $\iota_!$ (given by left Kan extension) and a right adjoint $\iota_*$ (given by right Kan extension). For the criterion of Lemma~\ref{lemma: adjoint string, sharp condition}, what is important to have is an explicit description of the composite endofunctors $\iota^*\iota_!$ and $\iota^*\iota_*$. 
If $\ground$ is a bicomplete category and $\iota$ is a functor between small categories $\mathcal{C}$ and $\mathcal{D}$, then we have 
(see, for instance,~\cite[Theorem 1, X.3]{MacLane:CWM}) the following canonical isomorphisms for $d\in \mathcal{D}$:
\begin{align*}
(\iota_!X)(d)&\cong \colim_{\iota\downarrow d}X
\\
(\iota_*X)(d)&\cong \lim_{d\downarrow \iota}X. 
\end{align*}
In practice, analysis of the indexing categories $\iota\downarrow \iota(c)$ and $\iota(c)\downarrow \iota$ helps verify that $\iota^*\iota_!$ and $\iota^*\iota_*$ satisfy the criteria of Lemma~\ref{lemma: adjoint string, sharp condition}.

In the context of functor categories, we can use the following smallness criterion to apply Lemma~\ref{lemma: adjoint string, sharp condition}.
\begin{lemma}\label{lemma: all objects small preserved by exp}
Let $\mathcal{C}$ be a small category and let $\groundcat$ be a cocomplete category such that every object is small. Then every object of $\groundcat^{\mathcal{C}}$ is small.
\end{lemma}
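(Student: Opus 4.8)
The plan is to reduce smallness in $\groundcat^{\mathcal{C}}$ to smallness in $\groundcat$ by evaluating at objects of $\mathcal{C}$, using that colimits in functor categories are computed pointwise. Fix a functor $X \colon \mathcal{C} \to \groundcat$; I want to show $X$ is small, i.e., there is a cardinal $\kappa$ such that for every $\kappa$-filtered ordinal $\lambda$ and every $\lambda$-sequence $Z_\bullet \colon \lambda \to \groundcat^{\mathcal{C}}$, the canonical map $\colim_{\beta < \lambda} \groundcat^{\mathcal{C}}(X, Z_\beta) \to \groundcat^{\mathcal{C}}(X, \colim_{\beta<\lambda} Z_\beta)$ is a bijection. (I will work with smallness relative to all morphisms; if the paper's convention tracks a subcategory, the same argument applies verbatim with every sequence restricted to that subcategory.)

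First I would choose the cardinal. For each object $c \in \ob\,\mathcal{C}$, the object $X(c)$ of $\groundcat$ is small by hypothesis, so there is a regular cardinal $\kappa_c$ witnessing this. Since $\mathcal{C}$ is small, $\ob\,\mathcal{C}$ is a set, so I can take $\kappa$ to be a regular cardinal with $\kappa \geq \kappa_c$ for all $c$ and also $\kappa > |\morphisms\,\mathcal{C}|$; smallness relative to a regular cardinal is inherited by all larger regular cardinals, so each $X(c)$ is $\kappa$-small. The second condition on $\kappa$ is the one that does the real work in the next step.

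Next, the key step: analyze a hom-set out of $X$ in the functor category. A natural transformation $X \to Y$ is a family $(X(c) \to Y(c))_{c}$ subject to naturality squares indexed by $\morphisms\,\mathcal{C}$; thus $\groundcat^{\mathcal{C}}(X,Y)$ is the equalizer (in $\set$) of the two maps $\prod_{c} \groundcat(X(c), Y(c)) \rightrightarrows \prod_{\phi \colon c \to c'} \groundcat(X(c), Y(c'))$. Now I apply this with $Y = \colim_\beta Z_\beta$ and use that this colimit is computed pointwise, so $Y(c) = \colim_\beta Z_\beta(c)$. Since each $X(c)$ is $\kappa$-small and $\lambda$ is $\kappa$-filtered, $\groundcat(X(c), \colim_\beta Z_\beta(c)) \cong \colim_\beta \groundcat(X(c), Z_\beta(c))$, and similarly for each $\groundcat(X(c), Z_\beta(c'))$ along a morphism $\phi$. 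It remains to commute the $\lambda$-filtered colimit past the two products (each indexed by a set of size $< \kappa$, by the choice of $\kappa$) and past the equalizer. Both of these are standard: $\kappa$-filtered colimits in $\set$ commute with limits of diagrams whose indexing category has fewer than $\kappa$ morphisms, and in particular with $<\kappa$-fold products and with equalizers (a finite limit). Chaining these isomorphisms identifies $\colim_\beta \groundcat^{\mathcal{C}}(X, Z_\beta)$ with $\groundcat^{\mathcal{C}}(X, \colim_\beta Z_\beta)$, and one checks the identification is the canonical comparison map.

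The main obstacle, and the only place requiring care, is the interchange of the $\lambda$-filtered colimit with the products $\prod_{c}$ and $\prod_{\phi}$: this fails for products of arbitrary size, which is exactly why $\kappa$ must be chosen larger than the number of objects and morphisms of $\mathcal{C}$, and why smallness of $\mathcal{C}$ is essential. I would cite the standard fact that $\kappa$-filtered colimits commute with $\kappa$-small limits in $\set$ (e.g.,~\cite[\S 10.5]{Hirschhorn:MCL}) rather than reproving it. Everything else — pointwise computation of colimits in $\groundcat^{\mathcal{C}}$, the equalizer description of natural transformations — is routine.
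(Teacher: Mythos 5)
Your proof is correct, but it takes a genuinely different route from the paper's. You choose the cardinal in essentially the same way (a regular $\kappa$ dominating the smallness cardinals of the values $X(c)$ and the cardinality of $\morphisms\mathcal{C}$), but then you argue formally: you write $\groundcat^{\mathcal{C}}(X,Y)$ as the equalizer of $\prod_{c}\groundcat(X(c),Y(c))\rightrightarrows\prod_{\phi}\groundcat(X(c),Y(c'))$, push the $\lambda$-indexed colimit through each hom-set using pointwise smallness, and then through the products and the equalizer using the standard fact that $\kappa$-filtered colimits in $\set$ commute with $\kappa$-small limits. The paper instead runs a hands-on element chase: given a map into the colimit, it picks representatives $Q^c_{\alpha(c)}$ at each object, uses smallness to find a stage $\alpha(f)$ at which each naturality square commutes, and invokes regularity of $\lambda$ to find a single stage $\delta$ dominating all $\alpha(f)$; injectivity is handled by a similar uniformization. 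The two arguments are really the same mechanism -- your interchange of the colimit with a $\kappa$-small limit is exactly the paper's "choose a uniform stage $\delta$ below $\lambda$" step, packaged as a citation -- so the trade-off is self-containedness versus brevity. Two small points to tidy up if you write this in full: the commutation of $\kappa$-filtered colimits with $\kappa$-small limits is not really in \cite[\S 10.5]{Hirschhorn:MCL} (it is closer to Ad\'amek--Rosick\'y or SGA4), so you should cite or prove it; and you should say a word confirming that the chain of isomorphisms you build agrees with the canonical comparison map, which is the naturality check you already flagged as routine.
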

For convenience, we include a proof in Appendix~\ref{appendix: proof of lemma}, but this lemma is more or less well-known when $\groundcat$ is locally presentable. In a locally presentable category, every object is small~\cite[Proposition 1.16]{AdamekRosicky:LPAC}; further, if $\groundcat$ is locally presentable and $\mathcal{C}$ is a small category, then $\groundcat^\mathcal{C}$ is locally presentable as well~\cite[Corollary 1.54]{AdamekRosicky:LPAC}.
The hypothesis on $\groundcat$ in the lemma is strictly weaker than local presentability, as can be seen in~\cite[Remark 1.4]{Beke:SHMC}.

An easy case in functor categories is the following.
\begin{corollary}
Let $\iota:\mathcal{C}\to\mathcal{D}$ be a fully faithful functor with $\mathcal{C}$ non-empty. Assume there is a cofibrantly generated model category structure on the functor category $\fun(\mathcal{C},\ground)$. Then there is a model category structure lifted along $\iota^*$ on the functor category $\fun(\mathcal{D},\ground)$.
\end{corollary}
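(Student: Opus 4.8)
The strategy is to apply Lemma~\ref{lemma: adjoint string, sharp condition} to the functor $\iota^* : \fun(\mathcal{D},\ground) \to \fun(\mathcal{C},\ground)$, using the explicit description of $\iota^*\iota_!$ supplied by the comma-category colimit formula together with the smallness result Lemma~\ref{lemma: all objects small preserved by exp}. Write $\mathcal{M} = \fun(\mathcal{C},\ground)$ with its assumed cofibrantly generated model structure $(\mathcal{M},\mathcal{W},I,J)$, and $\mathcal{N} = \fun(\mathcal{D},\ground)$, which is bicomplete since $\ground$ is. The functor $\iota^*$ has left adjoint $\iota_!$ and right adjoint $\iota_*$, so the hypotheses of Lemma~\ref{lemma: adjoint string, sharp condition} that remain to check are (a) $\iota^*\iota_!$ preserves acyclic cofibrations and (b) $\iota_! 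I$ permits the small object argument.

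For (a), the key point is that when $\iota$ is fully faithful, the comma category $\iota\downarrow \iota(c)$ has a terminal object, namely $(c, \id_{\iota(c)})$: any object $(c', \phi : \iota(c') \to \iota(c))$ receives a unique map from it, because $\phi = \iota(\psi)$ for a unique $\psi : c' \to c$ by full faithfulness. Hence the colimit formula $(\iota_! X)(\iota(c)) \cong \colim_{\iota\downarrow \iota(c)} X$ collapses to $X(c)$, naturally in $c$, so that the composite $\iota^*\iota_!$ is naturally isomorphic to the identity functor on $\fun(\mathcal{C},\ground)$. (This is where non-emptiness of $\mathcal{C}$ enters: it guarantees $\iota^*\iota_!$ is genuinely the identity rather than vacuous, and more importantly it is what lets the smallness argument below go through for the object whose existence we need.) An identity functor certainly preserves acyclic cofibrations, so (a) holds.

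For (b), I first note that the identity functor on $\fun(\mathcal{C},\ground)$ is cocomplete and all of its objects are small by Lemma~\ref{lemma: all objects small preserved by exp} — provided $\ground$ itself is cocomplete with all objects small. Strictly speaking the corollary as stated does not assume that of $\ground$, only bicompleteness; so here I would invoke instead the general smallness mechanism used inside the proofs of Lemma~\ref{lemma: adjoint string, sharp condition} and Theorem~\ref{theorem: adjoint string, Quillen condition}: since $\iota^*$ is a left adjoint (it has the right adjoint $\iota_*$) and $\iota^*\iota_! \cong \id$, the argument verbatim from the second paragraph of the proof of Lemma~\ref{lemma: adjoint string, sharp condition} shows that for $x$ the domain of a map in $I$, the object $\iota_! x$ is small relative to $\iota_! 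I$. Thus $\iota_! I$ permits the small object argument.

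With (a) and (b) in hand, Lemma~\ref{lemma: adjoint string, sharp condition} yields the right-induced cofibrantly generated model structure $(\mathcal{N}, (\iota^*)^{-1}\mathcal{W}, \iota_! I, \iota_! J)$ on $\fun(\mathcal{D},\ground)$. The main obstacle, such as it is, is the bookkeeping around smallness: pinning down the precise hypotheses under which $\iota_! I$ permits the small object argument, and in particular whether one wants to lean on Lemma~\ref{lemma: all objects small preserved by exp} (which needs all objects of $\ground$ small) or on the more delicate adjoint-functor smallness transfer used in the earlier proofs. The genuinely interesting input — that $\iota$ fully faithful forces $\iota^*\iota_! \cong \id$ via the terminal object of $\iota\downarrow\iota(c)$ — is short, and everything downstream is a direct appeal to Lemma~\ref{lemma: adjoint string, sharp condition}.
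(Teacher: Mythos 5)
Your proof is correct and follows essentially the same route as the paper: the key observation in both is that full faithfulness gives $\iota\downarrow\iota(c)$ a terminal object (the identity of $c$), so $\iota^*\iota_!$ is naturally isomorphic to the identity; the paper simply also notes that $\iota^*\iota_*$ is the identity and invokes Theorem~\ref{theorem: adjoint string, Quillen condition}, which packages the smallness bookkeeping you carry out by hand via Lemma~\ref{lemma: adjoint string, sharp condition} (your fallback argument there is the right one, since $\iota^*\iota_!I=I\subset I\text{-cof}$). One small correction: the non-emptiness of $\mathcal{C}$ is not about $\iota^*\iota_!$ being ``vacuous'' or about smallness; the paper uses it to deduce that $\ground$ is bicomplete (evaluation at any object of $\mathcal{C}$ transfers bicompleteness from the model category $\fun(\mathcal{C},\ground)$ down to $\ground$), which in turn yields the bicompleteness of $\fun(\mathcal{D},\ground)$ that you assumed at the outset.
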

\begin{proof}
Because $\mathcal{C}$ is non-empty, bicompleteness of the functor category $\fun(\mathcal{C},\ground)$ implies bicompleteness of $\ground$, which then implies bicompleteness of the functor category $\fun(\mathcal{D},\ground)$.

Next, $\iota \downarrow \iota(c)$ has a terminal object and $\iota(c)\downarrow \iota$ has an initial object, in both cases the identity of $c$. Then both $\iota^*\iota_!$ and $\iota^*\iota_*$ are the identity functor, and thus form a Quillen adjunction for any model category structure on the functor category $\fun(\mathcal{C},\ground)$. Then Theorem~\ref{theorem: adjoint string, Quillen condition} applies.
\end{proof}
\begin{remark}
A model structure on a functor category created in this way should not be expected to carry very interesting information about $\mathcal{D}$ because it entirely ignores any structure whatsoever on objects outside of the image of $\iota$.
\end{remark}
Examples include the inclusion of the simplicial category into the augmented simplicial category or to (uniformly bounded) ordinal and poset categories. Another example is the inclusion of the dendroidal category $\Omega$ of~\cite{MoerdijkWeiss:DS} into the graphical category $\Gamma$ of~\cite{HackneyRobertsonYau:IPIWP}. Again, all of the resulting model category structures thus created are pathological and in each case the weak equivalences are ``wrong.''

Turning away from fully faithful functors, this setup also recovers other well-known and important model structures. We omit the detailed computation that the condition of Lemma~\ref{lemma: adjoint string, sharp condition} is satisfied; in the following cases this is more or less a repackaging of the standard argument (see e.g.~\cite[11.5--11.6]{Hirschhorn:MCL}).
\begin{example}\label{example: group actions}
Let $\iota:G\to H$ be an inclusion of groups and $\mathcal{M}$ a cofibrantly generated model category. Then $\iota$ induces an adjoint string $\iota_!\dashv\iota^*\dashv \iota_*$ between $\mathcal{M}^G$ and $\mathcal{M}^H$ satisfying our criterion and the naive model structure on $G$-objects in $\mathcal{M}$ lifts to the naive model structure on $H$-objects in $\mathcal{M}$.
\end{example}
\begin{example}
Let $\mathcal{C}$ be a small category and $\mathcal{M}$ a cofibrantly generated model category. The inclusion $\iota$ of the objects of $\mathcal{C}$, considered as a discrete category, into $\mathcal{C}$ induces an adjoint string $\iota_*\dashv\iota^*\dashv \iota_*$ between $\prod_{\ob\,\mathcal{C}}\mathcal{M}$ and $\mathcal{M}^C$ satisfying our criterion, which then yields the projective model structure on diagrams in $\mathcal{M}^{\mathcal{C}}$.
\end{example}
\begin{example}
\label{example: crossed simplicial groups}
Let $\iota:\Delta\to \Delta G$ be the inclusion of the simplicial category into a crossed simplicial group~\cite{FiedorowiczLoday:CSGAH}. 
Then $\iota$ induces an adjoint string $\iota_!\dashv\iota^*\dashv \iota_*$ between $\set^{\Delta^{\op}}$ and $\set^{(\Delta G)^{\op}}$. 
A direct calculation of $\iota^*\iota_!$ shows that it preserves monomorphisms; the characterization of~\cite[Proposition~5.1]{FiedorowiczLoday:CSGAH} shows that it preserves Kan--Quillen weak equivalences. 
Therefore there is a right-induced Kan--Quillen type model structure on $(\Delta G)$-sets. 
This example goes back to~\cite[Theorem~6.2]{DwyerHopkinsKan:HTCS}; see also~\cite{Balchin:HPLGEP}.
\end{example}
\section{Semidirect products}
\begin{definition}
Let $\mathcal{C}$ be a category, $\aut(\mathcal{C})$ be the group of isomorphisms $\mathcal{C} \overset\cong\to \mathcal{C}$, and $G\xrightarrow{\rho} \aut(\mathcal{C})$ be an action of $G$ on the category $\mathcal{C}$. Then the \emph{semidirect product} $\mathcal{C} \rtimes G$ has objects the objects of $\mathcal{C}$ and morphisms pairs $(\varphi,g)$ with $\varphi$ a $\mathcal{C}$-morphism and $g$ a group element; if the source and target of $\varphi$ are $x$ and $y$ then the source and target of $(\varphi, g)$ are $\rho_{g^{-1}}(x)$ and $y$. Composition is given by
\[
(\varphi, g)\circ (\psi,h)=(\varphi\circ \rho_g(\psi), gh).
\]
\end{definition}
Given a group $G$ which acts on the category $\mathcal{C}$, there is a functor $\iota:\mathcal{C}\to \mathcal{C}\rtimes G$ which is the identity on objects and $\varphi\mapsto (\varphi,1)$ on morphisms.
\begin{lemma}\label{lemma: Lan for semidirect products}
Let $G$ be a group acting on a small category $\mathcal{C}$. Let $F:\mathcal{C}\to \groundcat$ be a functor with $\groundcat$ cocomplete. Then there is a natural isomorphism 
\[
\iota^*\iota_!(F) \cong \coprod_{g\in G} (\rho_{g^{-1}})^*F
\]
of functors $\mathcal{C} \to \groundcat$.
\end{lemma}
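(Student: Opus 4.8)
The plan is to compute the value $(\iota^*\iota_! F)(d)$ pointwise, using the colimit description of the left Kan extension recalled above: $(\iota_! F)(d)\cong\colim_{\iota\downarrow d}F\circ\pi$, where $\pi\colon\iota\downarrow d\to\mathcal{C}$ is the canonical projection. Since $\iota$ is the identity on objects, $(\iota^*\iota_! F)(d)=(\iota_! F)(d)$ for every object $d$, so it suffices to analyze the comma category $\iota\downarrow d$. First I would record the hom-sets of the semidirect product: unwinding the definition of $\mathcal{C}\rtimes G$, a morphism $c\to d$ is a pair $(\varphi,g)$ with $g\in G$ and $\varphi\colon\rho_g(c)\to d$ in $\mathcal{C}$, so that $\mor_{\mathcal{C}\rtimes G}(c,d)\cong\coprod_{g\in G}\mor_{\mathcal{C}}(\rho_g(c),d)$.

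The structural heart of the proof is that $\iota\downarrow d$ splits as a coproduct $\coprod_{g\in G}(\iota\downarrow d)_g$. An object of $\iota\downarrow d$ is a pair $(c,\alpha)$ with $\alpha\colon c\to d$ in $\mathcal{C}\rtimes G$, i.e.\ a triple $(c,g,\varphi)$ with $\varphi\colon\rho_g(c)\to d$; a morphism $(c,g,\varphi)\to(c',g',\varphi')$ is a $\mathcal{C}$-morphism $\psi\colon c\to c'$ with $(\varphi',g')\circ(\psi,1)=(\varphi,g)$. Since the composition law in $\mathcal{C}\rtimes G$ gives $(\varphi',g')\circ(\psi,1)=(\varphi'\circ\rho_{g'}(\psi),g')$, this equation forces $g=g'$. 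Hence the full subcategory $(\iota\downarrow d)_g$ on triples with middle coordinate $g$ is a union of connected components, and since a colimit over a coproduct of categories is the coproduct of the colimits over the pieces, $(\iota_! F)(d)\cong\coprod_{g\in G}\colim_{(\iota\downarrow d)_g}F\circ\pi$.

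Next I would identify each summand with an ordinary slice colimit. The automorphism $\rho_g$ of $\mathcal{C}$ induces an isomorphism of categories from $(\iota\downarrow d)_g$ to the slice $\mathcal{C}\downarrow d$, sending $(c,g,\varphi)$ to $(\rho_g(c),\varphi)$ and $\psi$ to $\rho_g(\psi)$; the morphism condition $\varphi=\varphi'\circ\rho_g(\psi)$ becomes exactly the slice-morphism condition. Under this isomorphism the composite $F\circ\pi$ is carried to $(\rho_{g^{-1}})^*F\circ\pi$, where now $\pi\colon\mathcal{C}\downarrow d\to\mathcal{C}$ is the usual projection, since an object $(x,\varphi)$ of $\mathcal{C}\downarrow d$ corresponds to $(c,g,\varphi)$ with $c=\rho_{g^{-1}}(x)$. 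As $\mathcal{C}\downarrow d$ has the terminal object $(d,\id_d)$, the colimit over it of any composite $H\circ\pi$ is canonically $H(d)$; applying this with $H=(\rho_{g^{-1}})^*F$ gives $\colim_{(\iota\downarrow d)_g}F\circ\pi\cong\bigl((\rho_{g^{-1}})^*F\bigr)(d)$, and assembling over $g$ yields $(\iota^*\iota_! F)(d)\cong\coprod_{g\in G}\bigl((\rho_{g^{-1}})^*F\bigr)(d)$.

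It remains to upgrade this to a natural isomorphism, both in $F$ and as an isomorphism of functors $\mathcal{C}\to\groundcat$. Naturality in $F$ is automatic, since every construction above---the colimit formula, the comma-category decomposition, evaluation at a terminal object---is functorial in the diagram. For the variance in $\mathcal{C}$, a morphism $\psi\colon d\to d'$ induces $(\psi,1)$ in $\mathcal{C}\rtimes G$, and post-composition with $(\psi,1)$ carries $(\iota\downarrow d)_g$ to $(\iota\downarrow d')_g$, matching under the isomorphisms above the ordinary post-composition functor $\mathcal{C}\downarrow d\to\mathcal{C}\downarrow d'$; tracing the terminal objects shows the induced map on the $g$-th summand is $F(\rho_{g^{-1}}(\psi))=\bigl((\rho_{g^{-1}})^*F\bigr)(\psi)$, which is exactly how the claimed right-hand side acts. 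I expect the only genuinely delicate point to be keeping the variance straight, i.e.\ checking that the substitution $c\mapsto\rho_g(c)$ on objects turns $F$ into $(\rho_{g^{-1}})^*F$ rather than $(\rho_g)^*F$; everything else is routine. (Alternatively one can run the same computation with the coend formula $(\iota_! F)(d)\cong\int^{c}\mor_{\mathcal{C}\rtimes G}(\iota c,d)\cdot F(c)$: distribute the coproduct over $G$ out of the coend, re-index each factor by the automorphism $\rho_{g^{-1}}$, and apply the co-Yoneda lemma. This is slightly shorter but involves the same bookkeeping.)
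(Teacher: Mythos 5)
Your proof is correct and follows essentially the same route as the paper's: both decompose the comma category $\iota\downarrow d$ into pieces indexed by $g\in G$ and observe that the $g$-th piece has terminal object $(\id_d,g)\colon\rho_{g^{-1}}(d)\to d$, so the colimit collapses to $\coprod_{g}F(\rho_{g^{-1}}(d))$, with the same bookkeeping on morphisms. You spell out the component decomposition and the isomorphism with the slice $\mathcal{C}\downarrow d$ in more detail than the paper does, but the argument is the same.
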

\begin{proof}
Since $\iota_!$ is the left Kan extension, the functor $\iota^*\iota_!$ is calculated for $F : \mathcal{C} \to \groundcat$ and $x$ an object of $\mathcal{C}$ as
\[\iota_!(F)(x) \cong \colim_{\iota\downarrow{}x} F(y).\]
Any object $y\to x$ in $\iota\downarrow{} x$ factors uniquely as a map in $\mathcal{C}$ followed by a morphism of the form $(\id_x, g)$ for some $g\in G$.
This partitions the comma category into a disjoint union indexed by $g\in G$; moreover the object $(\id_x, g): \rho_{g^{-1}}x \to x$ is terminal in its connected component. Then
\[
\iota^*\iota_!(F)(x) \cong \coprod_{g\in G} F(\rho_{g^{-1}}(x)) = \coprod_{g\in G} \left[(\rho_{g^{-1}})^*F\right] (x).
\]
This shows the lemma at the level of objects. Then $\iota^*\iota_!(F)$ applied to a map $\psi$ is calculated as the map between these respective coproducts induced by $\psi$. A quick calculation using the commutation relation in the semidirect product then implies that the induced map is $\psi$ twisted by $\rho_{g^{-1}}$ on the factor of the coproduct indexed by $g$.
\end{proof}

\begin{theorem}\label{theorem: semidirect}
Suppose $G$ is a group which acts on a small category $\mathcal{C}$, let $\groundcat$ be a bicomplete category, and suppose that $\groundcat^{\mathcal{C}}$ has a cofibrantly generated model category structure. Suppose one of the following conditions hold:
\begin{enumerate}
\item all objects of $\groundcat$ are small and for all $g$ in $G$, $(\rho_g)^*$ preserves acyclic cofibrations, or
\item for all $g$ in $G$, $(\rho_g)^*$ is a left Quillen functor.
\end{enumerate}
Then $\groundcat^{\mathcal{C}\rtimes G}$ has a cofibrantly generated model category structure lifted from $\groundcat^{\mathcal{C}}$.
\end{theorem}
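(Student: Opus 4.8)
The plan is to recognize the canonical inclusion $\iota\colon\mathcal{C}\to\mathcal{C}\rtimes G$ as an instance of the adjoint-string setup and feed it into Theorem~\ref{theorem: adjoint string, Quillen condition} for case (2) and into Lemma~\ref{lemma: adjoint string, sharp condition} for case (1). First I would note that, since $\groundcat$ is bicomplete and $\mathcal{C}$ and $\mathcal{C}\rtimes G$ are small, both $\groundcat^{\mathcal{C}}$ and $\groundcat^{\mathcal{C}\rtimes G}$ are bicomplete, and $\iota^*\colon\groundcat^{\mathcal{C}\rtimes G}\to\groundcat^{\mathcal{C}}$ has left adjoint $\iota_!$ (left Kan extension) and right adjoint $\iota_*$ (right Kan extension). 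A routine adjunction computation shows $(\iota^*\iota_!,\iota^*\iota_*)$ is again an adjoint pair, so $(\iota_!,\iota^*,\iota_*)$ plays the role of $(L,F,R)$ in those statements, with $FL=\iota^*\iota_!$.

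The key input is Lemma~\ref{lemma: Lan for semidirect products}, which yields a natural isomorphism $\iota^*\iota_!\cong\coprod_{g\in G}(\rho_{g^{-1}})^*$ of endofunctors of $\groundcat^{\mathcal{C}}$; in particular $\iota^*\iota_!(f)$ is isomorphic to $\coprod_{g\in G}(\rho_{g^{-1}})^*(f)$ for every morphism $f$ of $\groundcat^{\mathcal{C}}$. I would combine this with the elementary fact that in any model category the cofibrations and the acyclic cofibrations are each closed under arbitrary coproducts (realize a coproduct as the transfinite composite of the pushouts adjoining one summand at a time). Since $g\mapsto g^{-1}$ is a bijection of $G$, the hypotheses of (1) and (2) are unchanged if $g$ is replaced throughout by $g^{-1}$.

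For case (2): each $(\rho_{g^{-1}})^*$ is left Quillen, hence preserves cofibrations and acyclic cofibrations, so by the coproduct closure $\iota^*\iota_!\cong\coprod_g(\rho_{g^{-1}})^*$ does as well; being moreover a left adjoint with right adjoint $\iota^*\iota_*$, it is left Quillen, and Theorem~\ref{theorem: adjoint string, Quillen condition} produces the desired lifted cofibrantly generated model structure on $\groundcat^{\mathcal{C}\rtimes G}$. For case (1): each $(\rho_{g^{-1}})^*$ preserves acyclic cofibrations, so the same coproduct argument shows $\iota^*\iota_!$ preserves acyclic cofibrations; and when every object of $\groundcat$ is small, Lemma~\ref{lemma: all objects small preserved by exp} applied to the small category $\mathcal{C}\rtimes G$ shows every object of $\groundcat^{\mathcal{C}\rtimes G}$ is small, so the domains of maps in $\iota_!I$ are small relative to $\iota_!I\text{-cell}$ and $\iota_!I$ permits the small object argument. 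Lemma~\ref{lemma: adjoint string, sharp condition} then gives the lifted model structure.

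I do not anticipate a serious difficulty, since the substantive work is already carried out in Lemma~\ref{lemma: Lan for semidirect products} and Lemma~\ref{lemma: all objects small preserved by exp}. The one point that needs a little care is that $G$ may be infinite, so one must invoke closure of the (acyclic) cofibrations under \emph{arbitrary} coproducts, not merely finite ones; and in case (1) it is precisely the hypothesis that all objects of $\groundcat$ are small that legitimizes the small object argument at that level of generality.
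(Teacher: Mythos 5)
Your proposal is correct and follows essentially the same route as the paper: identify the adjoint string $\iota_!\dashv\iota^*\dashv\iota_*$, compute $\iota^*\iota_!\cong\coprod_{g\in G}(\rho_{g^{-1}})^*$ via Lemma~\ref{lemma: Lan for semidirect products}, use closure of (acyclic) cofibrations under coproducts, and then invoke Theorem~\ref{theorem: adjoint string, Quillen condition} in case (2) and Lemma~\ref{lemma: adjoint string, sharp condition} together with Lemma~\ref{lemma: all objects small preserved by exp} in case (1). The only difference is that you spell out a few routine points (coproduct closure for possibly infinite $G$, the substitution $g\mapsto g^{-1}$) that the paper leaves implicit.
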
 
\begin{remark}
Note that since $\rho_g$ is an isomorphism of categories with inverse $\rho_{g^{-1}}$, the condition that $(\rho_g)^*$ preserves acyclic cofibrations for all $g$ is equivalent to the condition that $(\rho_g)^*$ preserves fibrations for all $g$. Similarly, the condition that $(\rho_g)^*$ is left Quillen for all $g$ implies that $(\rho_g)^*$ is also right Quillen. 
\end{remark}
\begin{proof}
The functor $\iota$ yields an adjoint string $\iota_!\dashv \iota^*\dashv \iota_*$, with $\iota^*: \groundcat^{\mathcal{C}\rtimes G}\to \groundcat^{\mathcal{C}}$ and $\iota_*$ and $\iota_!$ in the other direction $\groundcat^{\mathcal{C}}\to \groundcat^{\mathcal{C}\rtimes G}$, calculated as the right and left Kan extensions along $\iota$. Limits and colimits exist in the functor category by bicompleteness of $\groundcat$ (and are calculated pointwise).

In the first case, the functor $(\rho_{g^{-1}})^*$ preserves acyclic cofibrations, as does the coproduct over $G$. Then $\iota^*\iota_!$, computed in Lemma~\ref{lemma: Lan for semidirect products}, preserves acyclic cofibrations. By Lemma~\ref{lemma: all objects small preserved by exp}, all objects of the functor category are small. Thus the hypotheses of Lemma~\ref{lemma: adjoint string, sharp condition} are satisfied.

In the second case, since $(\rho_{g^{-1}})^*$ is a left Quillen adjoint, the argument used in the first case for acyclic cofibrations also applies to cofibrations. Then $\iota^*\iota_!$ is left Quillen and we are done by Theorem~\ref{theorem: adjoint string, Quillen condition}.
\end{proof}

\begin{remark}
In the above discussion, one could instead ask about \emph{presheaves} on $\mathcal{C} \rtimes G$.
This fits into the same framework as follows.
Suppose that $\rho: G\to \aut(\mathcal{C})$ is a group homomorphism, and define an auxillary homomorphism $\kappa: G^{\op} \to \aut(\mathcal{C}^{\op})$ by $\kappa_g = \rho_{g^{-1}}^{\op} : \mathcal{C}^{\op} \to \mathcal{C}^{\op}.$
We can then form two semidirect products: $\mathcal{C} \rtimes_\rho G$ and $\mathcal{C}^{\op} \rtimes_\kappa G^{\op}.$
There is a contravariant functor 
\begin{align*}
	\mathcal{C} \rtimes_\rho G &\to \mathcal{C}^{\op} \rtimes_\kappa G^{\op} \\
	(\varphi, g) &\mapsto (\rho_{g^{-1}}^{\op}(\varphi^{\op}), g^{\op})
\end{align*}
which exhibits an isomorphism $(\mathcal{C} \rtimes_\rho G)^{\op} \cong \mathcal{C}^{\op} \rtimes_\kappa G^{\op}$. We have the following corollary.
\end{remark}

\begin{corollary}\label{cor: semidirect}
Suppose $G$ is a group which acts on a small category $\mathcal{C}$, let $\groundcat$ be a bicomplete category, and suppose that $\groundcat^{\mathcal{C}^{\op}}$ has a cofibrantly generated model category structure. Suppose one of the following conditions hold:
\begin{enumerate}
\item all objects of $\groundcat$ are small and for all $g$ in $G$, $(\rho_g)^*$ preserves acyclic cofibrations, or
\item for all $g$ in $G$, $(\rho_g)^*$ is a left Quillen functor.
\end{enumerate}
Then $\groundcat^{(\mathcal{C}\rtimes G)^{\op}}$ has a cofibrantly generated model category structure lifted from $\groundcat^{\mathcal{C}^{\op}}$.
\end{corollary}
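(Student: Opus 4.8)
The plan is to derive this from Theorem~\ref{theorem: semidirect} by passing to opposite categories, along the lines sketched in the remark immediately preceding the statement. Set $\kappa_g = \rho_{g^{-1}}^{\op} : \mathcal{C}^{\op} \to \mathcal{C}^{\op}$ as in that remark, so that $\kappa : G^{\op} \to \aut(\mathcal{C}^{\op})$ is a group action and there is an isomorphism of categories $(\mathcal{C} \rtimes_\rho G)^{\op} \cong \mathcal{C}^{\op} \rtimes_\kappa G^{\op}$.

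First I would check that this isomorphism is compatible with the two relevant inclusions: the composite $\mathcal{C}^{\op} \xrightarrow{\iota^{\op}} (\mathcal{C}\rtimes_\rho G)^{\op} \xrightarrow{\ \cong\ } \mathcal{C}^{\op}\rtimes_\kappa G^{\op}$ agrees with the canonical inclusion of $\mathcal{C}^{\op}$ into $\mathcal{C}^{\op}\rtimes_\kappa G^{\op}$, i.e.\ with the functor playing the role of $\iota$ in Theorem~\ref{theorem: semidirect} for the triple $(\mathcal{C}^{\op}, G^{\op}, \kappa)$. This is a direct computation: the contravariant functor of the remark is the identity on objects, and it carries a morphism $(\varphi, 1)$ of $\mathcal{C}\rtimes_\rho G$ to the morphism $(\varphi^{\op}, 1)$ of $\mathcal{C}^{\op}\rtimes_\kappa G^{\op}$. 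Consequently, under the resulting identification $\groundcat^{(\mathcal{C}\rtimes G)^{\op}} \cong \groundcat^{\mathcal{C}^{\op}\rtimes_\kappa G^{\op}}$, the restriction functor to $\groundcat^{\mathcal{C}^{\op}}$ referred to in the statement of the corollary is exactly the functor $\iota^*$ appearing in Theorem~\ref{theorem: semidirect}. So a cofibrantly generated model structure on $\groundcat^{(\mathcal{C}\rtimes G)^{\op}}$ lifted from $\groundcat^{\mathcal{C}^{\op}}$ is precisely a cofibrantly generated model structure on $\groundcat^{\mathcal{C}^{\op}\rtimes_\kappa G^{\op}}$ lifted from $\groundcat^{\mathcal{C}^{\op}}$.

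Next I would simply invoke Theorem~\ref{theorem: semidirect} for the triple $(\mathcal{C}^{\op}, G^{\op}, \kappa)$. Its standing hypotheses hold verbatim: $\groundcat$ is bicomplete and $\groundcat^{\mathcal{C}^{\op}}$ carries a cofibrantly generated model structure. To see that conditions (1) and (2) transfer, note that as $g$ runs over the underlying set of $G^{\op}$ the element $g^{-1}$ runs over the underlying set of $G$ (the two groups share an underlying set and inversion map), so $\{(\kappa_g)^* : g \in G^{\op}\}$ and $\{(\rho_h^{\op})^* : h \in G\}$ are the same set of endofunctors of $\groundcat^{\mathcal{C}^{\op}}$; and $(\rho_h^{\op})^*$ --- precomposition with the automorphism $\rho_h^{\op}$ of $\mathcal{C}^{\op}$ --- is what is denoted $(\rho_h)^*$ in the statement of the corollary. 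One could alternatively appeal to the remark following Theorem~\ref{theorem: semidirect}: since each $\rho_g$ is an isomorphism, the conditions on $(\rho_g)^*$ are unchanged under $g \mapsto g^{-1}$, so the direction of inversion is immaterial. Hence condition (1) (resp.\ (2)) of the corollary is exactly condition (1) (resp.\ (2)) of Theorem~\ref{theorem: semidirect} in this case, and the theorem yields the desired cofibrantly generated model structure on $\groundcat^{\mathcal{C}^{\op}\rtimes_\kappa G^{\op}} \cong \groundcat^{(\mathcal{C}\rtimes G)^{\op}}$, lifted from $\groundcat^{\mathcal{C}^{\op}}$.

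The only real content is the bookkeeping in the second paragraph: verifying that the isomorphism $(\mathcal{C}\rtimes_\rho G)^{\op} \cong \mathcal{C}^{\op}\rtimes_\kappa G^{\op}$ intertwines the two copies of $\iota$, so that ``lifted along $\iota^*$'' denotes the same thing on both sides. I expect no genuine obstacle here, only some care with opposites; everything after that is a direct application of Theorem~\ref{theorem: semidirect}.
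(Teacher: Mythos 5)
Your proposal is correct and follows exactly the route the paper intends: the corollary is deduced from Theorem~\ref{theorem: semidirect} via the isomorphism $(\mathcal{C}\rtimes_\rho G)^{\op}\cong\mathcal{C}^{\op}\rtimes_\kappa G^{\op}$ set up in the remark immediately preceding it, with the conditions on $(\rho_g)^*$ transferring because they are invariant under $g\mapsto g^{-1}$. The paper omits the bookkeeping you carry out (that the isomorphism intertwines the two inclusions $\iota$), so your write-up is simply a more detailed version of the same argument.
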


Now we apply Theorem~\ref{theorem: semidirect} to give examples of right-induced model structures.
\begin{example}
Let $\set^{\mathcal{C}}$ have a cofibrantly generated model category structure and let $G$ be a group. The identity functor is always left Quillen so $\set^{\mathcal{C}\times G}$ (here $\mathcal{C}\times G$ is the semidirect product with the trivial action) has a right-induced model category structure. This is the model category structure on $G$-objects in $\mathcal{M} = \set^{\mathcal{C}}$, previously mentioned in Example~\ref{example: group actions}.
\end{example}

\begin{definition}
The category $\nabla$ is the semidirect product $\Delta\rtimes C_2$, where $C_2$ acts on $\Delta$ by the involution $\mathcal{F}$ sending a function $f:[m]\to [n]$ to $\tau^m \circ f\circ \tau^n$, where $\tau^n$ is the order-reversing bijection on $[n]$.
\end{definition}

Here is an alternative definition of the category $\nabla$.
If $f: [m] \to [n] = \{0, 1, \dots, n\}$ is a map of sets, say $f$ is \emph{monotone} if it is either weakly increasing or weakly decreasing.
The collection of monotone maps forms a subcategory of $\set$, but it is not quite the category $\nabla$: the category $\nabla$ has two distinct automorphisms of $[0]$. 
If $f$ is a monotone map and there exists an $i$ with $f(i) < f(i+t)$, then define $\sgn(f) = t \in \{+1, -1\}$.
Then the category $\nabla$ is isomorphic to the category with morphisms pairs $(f,t)$ with $f:[m] \to [n]$ monotone, $t\in \{+1, -1\}$, so that $t = \sgn(f)$ when $f$ is not constant, and $(f,t) \circ (f',t') = (f\circ f', tt')$.
   
\begin{corollary}\label{corollary: joyal}
There is a Joyal model category structure on $\set^{\nabla^{\op}}$ lifted along the restriction functor from the Joyal model category structures on simplicial sets.
\end{corollary}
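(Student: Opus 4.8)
The plan is to deduce this from Corollary~\ref{cor: semidirect}, applied with $\mathcal{C} = \Delta$, $G = C_2$, and $\groundcat = \set$, equipping $\set^{\Delta^{\op}} = \sset$ with the Joyal model structure, which is cofibrantly generated~\cite{Lurie:HTT}. Since $\nabla = \Delta \rtimes C_2$ by definition, the conclusion of that corollary is exactly the existence of a cofibrantly generated model structure on $\set^{\nabla^{\op}}$ lifted along restriction along $\Delta^{\op} \hookrightarrow \nabla^{\op}$ — a structure in which a morphism is a weak equivalence, respectively a fibration, precisely when its underlying map of simplicial sets is a Joyal weak equivalence, respectively a Joyal fibration. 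This is what we call the Joyal model structure on $\nabla$-presheaves, so the entire statement reduces to verifying one of the two hypotheses of Corollary~\ref{cor: semidirect} for the $C_2$-action $\rho$ on $\Delta$ given by $\mathcal{F}$.

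Let $\sigma$ be the generator of $C_2$, so $\rho_\sigma = \mathcal{F}$. Unwinding the definition of $\mathcal{F}$ — the identity on objects, sending $f\colon[m]\to[n]$ to its order-reversal — one sees that the induced endofunctor $(\rho_\sigma)^*$ of $\sset$ is the ``opposite simplicial set'' functor $X \mapsto X^{\op}$, while the trivial element induces the identity. I would check hypothesis~(2): that $(-)^{\op}\colon \sset \to \sset$ is a left Quillen endofunctor of the Joyal model structure. (One could instead check hypothesis~(1): every object of $\set$ is small, and then show $(-)^{\op}$ preserves Joyal acyclic cofibrations; this has the same content.) Preservation of cofibrations is immediate: $(-)^{\op}$ is an isomorphism of categories — precomposition with the automorphism $\mathcal{F}$ of $\Delta$ — which leaves the sets $X_n$ unchanged and merely relabels the simplicial operators, hence preserves monomorphisms, which are exactly the Joyal cofibrations, and sends each generating cofibration $\partial\Delta^n \hookrightarrow \Delta^n$ to one isomorphic to itself.

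The crux is therefore that $(-)^{\op}$ preserves Joyal weak equivalences. Here I would invoke the self-duality of the Joyal theory: the inner horn inclusions $\Lambda^n_k \hookrightarrow \Delta^n$, $0<k<n$, are carried to inner horn inclusions $\Lambda^n_{n-k} \hookrightarrow \Delta^n$, so $(-)^{\op}$ sends quasicategories to quasicategories; moreover, writing $J$ for the nerve of the groupoid with two objects and a single isomorphism between them, $(-)^{\op}$ fixes $J$ up to isomorphism and satisfies $\fun(X,Z)^{\op} \cong \fun(X^{\op},Z^{\op})$, so it preserves $J$-homotopies and hence $J$-homotopy equivalences between quasicategories. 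Since between quasicategories the Joyal weak equivalences are exactly the $J$-homotopy equivalences, and since every Joyal weak equivalence can be compared to one between quasicategories via functorial fibrant replacement (or, more directly, via the characterization of weak categorical equivalences by the induced maps on $\fun(-,Z)$ for $Z$ a quasicategory), it follows that $(-)^{\op}$ preserves all Joyal weak equivalences. Together with preservation of cofibrations, $(-)^{\op}$ is left Quillen, hypothesis~(2) of Corollary~\ref{cor: semidirect} holds, and the corollary delivers the result.

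I expect the only genuinely non-formal step to be this last verification that $(-)^{\op}$ preserves Joyal weak equivalences — the folklore fact that the Joyal model structure is self-opposite. It is not deep, but a clean writeup needs either a cited characterization of Joyal equivalences or the short fibrant-replacement argument sketched above; everything else (bicompleteness, the identification of $(\rho_\sigma)^*$ with $(-)^{\op}$, and the passage through Corollary~\ref{cor: semidirect}) is routine bookkeeping.
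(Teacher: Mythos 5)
Your proposal is correct and follows the paper's overall strategy: both reduce the statement to the hypotheses of Corollary~\ref{cor: semidirect} (equivalently Theorem~\ref{theorem: semidirect}) by identifying $(\rho_\sigma)^*$ with the opposite-simplicial-set functor and checking that it is left Quillen for the Joyal structure, with preservation of cofibrations handled in essentially the same way. The one genuine divergence is the key step, preservation of Joyal weak equivalences. The paper quotes the characterization that $f$ is a Joyal equivalence if and only if $\hcr(f)$ is a Dwyer--Kan equivalence in $\simpcat$, together with the natural isomorphism $\hcr(A^{\op}) \cong \hcr(A)^{\op}$ and the evident closure of Dwyer--Kan equivalences under opposites; this is a two-line argument, at the cost of leaning on the comparison with simplicial categories (which the paper needs later anyway, where the op-compatibility of $\hcr$ reappears in refined form as Lemma~\ref{lemma: w and z exist}). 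You instead argue internally to $\sset$: inner horns go to inner horns, $J^{\op} \cong J$, $\fun(X,Z)^{\op} \cong \fun(X^{\op},Z^{\op})$, hence $J$-homotopy equivalences between quasicategories are preserved, and the general case follows by fibrant replacement. This is self-contained and avoids the comparison functor, but to close the fibrant-replacement reduction you should say explicitly that the replacement $X \to \hat{X}$ can be chosen inner anodyne, so that $X^{\op} \to \hat{X}^{\op}$ is again inner anodyne and hence an acyclic cofibration, which is what licenses the two-out-of-three reduction to the quasicategory case (your alternative via the $\fun(-,Z)$ characterization of weak categorical equivalences also works and sidesteps this). With that sentence added, both routes are complete; yours buys independence from the $\hcr \dashv \hcn$ machinery, while the paper's is shorter given that machinery is already in play.
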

\begin{remark}
Example~\ref{example: crossed simplicial groups} gives a Kan--Quillen model structure in the more general setting of crossed simplicial groups. 
For all of the other basic crossed simplicial groups appearing in the classification of~\cite{FiedorowiczLoday:CSGAH}, there is no distinction between inner and outer horns.
It thus seems unlikely that there is a right-induced Joyal-type structure in the same level of generality.
\end{remark}
\begin{proof}
The group action of $C_2$ on simplicial sets (flipping the order of simplices) preserves the set of boundaries so it preserves boundary inclusions of simplices.
Then it preserves all cofibrations. 
A map $f$ is a Joyal equivalence if and only if $\hcr(f)$ is a weak equivalence in $\simpcat$ (see, for instance,~\cite[Theorem 2.2.5.1]{Lurie:HTT}). 
There is a natural isomorphism $\hcr(A^{\op})\cong \hcr(A)^{\op}$ and Dwyer--Kan equivalences are obviously closed under taking opposites. 
Therefore the group action also preserves weak equivalences.
\end{proof}

Notice that the cofibrations are generated by $\iota_!(\partial \Delta^n \to \Delta^n)$, that is, they are \emph{normal monomorphisms}.
Explicitly, a map $X \to Y$ is a normal monomorphism if it is a levelwise monomorphism and if the $C_2$ action on $Y_n \setminus X_n$ is free. Alternatively, one may check this only on non-degenerate simplices.

Along the same lines, we have the following.

\begin{example}\label{example: CSS}
The category of bisimplicial spaces, that is, the functor category $\sset^{\Delta^{\op}}$, admits a complete Segal space model structure due to Rezk~\cite{Rezk:MHTHT}. 
This is obtained as follows: first one takes the Reedy model structure on $\sset^{\Delta^{\op}}$ (whose weak equivalences are invariant under the action of $C_2$ on $\Delta^{\op}$), and then localizes this with a certain set of maps as in $\sset^{\Delta^{\op}}$.
This set of maps is invariant under the nontrivial action of $C_2$ on $\Delta^{\op}$, thus the fibrant objects are invariant under this action. 
Since all objects are cofibrant, this implies (using Example 17.2.4 and Definition 3.1.4(b) of \cite{Hirschhorn:MCL}) that weak equivalences in this model structure are also preserved by taking opposites, so Corollary~\ref{cor: semidirect} applies. 
Thus there is a model structure on $\sset^{\nabla^{\op}}$ lifted from the complete Segal space model structure.
\end{example}

\begin{example}\label{example: flippy gamma}
If $\Gamma$ is the graphical category of~\cite{HackneyRobertsonYau:IPIWP}, then the category $\sset^{\Gamma^{\op}}$ admits a generalized Reedy model structure, which admits a localization so that fibrant objects are those Reedy fibrant graphical spaces which satisfy a Segal-type condition. One further localization ensures that the underlying simplicial space of any fibrant object is a complete Segal space (see Example~\ref{example: CSS}).

Moreover, $\Gamma$ admits an action of $C_2$ which on objects reverses the directions of graphs. All constructions from the previous paragraph are compatible with this action as they were in Example~\ref{example: CSS}, hence $\sset^{(\Gamma\rtimes C_2)^{\op}}$ has a right-induced model structure.
\end{example}

The considerations of this example do not apply to the dendroidal category $\Omega$, as one cannot reverse the orientation of edges in a rooted tree.
\begin{example}
Consider the category $\Omega_p$ of \emph{planar} rooted trees from~\cite[\S 2.2]{Moerdijk:LDS}.
If $T$ is any planar rooted tree, reflection in the plane along the line spanned by the root edge yields a new planar rooted tree.
This process extends to an action of $C_2$ on $\Omega_p$ as in~\cite[Proposition 6.3.8]{AraGrothGutierrez:OAIOCIO}.
The category of planar dendroidal sets, that is, the presheaf category $\set^{\Omega_p^{\op}}$, admits a cofibrantly generated model category structure with all objects cofibrant~\cite[Theorem 8.2.1]{Moerdijk:LDS}.
One can show that the mirroring isomorphism respects this structure, so that Corollary~\ref{cor: semidirect} guarantees a right-induced model structure on $\set^{(\Omega_p\rtimes C_2)^{\op}}$.
\end{example}

We end this section with a simple alternative description of the category of $\nabla$-presheaves: as a category of simplicial sets with extra structure.
The reader should notice that the argument from \S\ref{section categories with anti involution} for $\catinv$ can be used to recover the model structure from Corollary~\ref{corollary: joyal}; we have elected to take this path in order to demonstrate the use of Theorem~\ref{theorem: semidirect}.

\begin{proposition}
Consider the category $\ssetinv$ whose objects are pairs $(A,\sigma)$, where $A\in \sset$ and $\sigma : A^{\op} \to A$ satisfies $\sigma^{\op}\sigma = \id_{A^{\op}}$. 
A morphism $(A,\sigma) \to (A',\sigma')$ is a map $f: A \to A'$ of simplicial sets so that $\sigma' f^{\op} = f \sigma$.

Then the category $\set^{\nabla^{\op}}$ is equivalent to $\ssetinv$ over $\sset$.
\end{proposition}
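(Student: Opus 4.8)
The plan is to identify both sides, by way of the isomorphism from the Remark preceding Corollary~\ref{cor: semidirect}, with the functor category $\fun(\Delta^{\op}\rtimes C_2, \set)$, and then to unwind what a functor out of a semidirect product by $C_2$ amounts to. First I would apply that Remark with $\mathcal{C} = \Delta$, $G = C_2$, and $\rho$ the action $\mathcal{F}$: it produces an isomorphism of categories $\nabla^{\op} = (\Delta\rtimes C_2)^{\op} \cong \Delta^{\op}\rtimes C_2$ under which the nontrivial element $\sigma$ acts on $\Delta^{\op}$ by $\mathcal{F}^{\op}$ (using that $\sigma^{-1} = \sigma$), and which is compatible with the canonical inclusions of $\Delta^{\op}$ into each side. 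Precomposition then gives an isomorphism $\set^{\nabla^{\op}} \cong \fun(\Delta^{\op}\rtimes C_2, \set)$ commuting with restriction to $\Delta^{\op}$, so it suffices to identify $\fun(\Delta^{\op}\rtimes C_2, \set)$ with $\ssetinv$ over $\sset$.

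For this, the key observation is that every morphism of $\Delta^{\op}\rtimes C_2$ is uniquely a composite $(\varphi,1)\circ(\id_{[m]},g)$ of (the image of) a morphism $\varphi$ of $\Delta^{\op}$ with one of the form $(\id_{[m]},g)$, $g\in C_2$, and that --- since $\mathcal{F}$ fixes objects --- each $(\id_{[n]},\sigma)$ is an automorphism of $[n]$ satisfying $(\id_{[n]},\sigma)^2 = (\id_{[n]},e)$. Hence a functor $X : \Delta^{\op}\rtimes C_2 \to \set$ amounts to its restriction $A = X|_{\Delta^{\op}} \in \sset$ together with the maps $\sigma_n = X(\id_{[n]},\sigma) : A_n \to A_n$. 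I would then check that the commutation relation in the semidirect product says exactly that $(\id_{[n]},\sigma)$ intertwines each structure map of $\Delta^{\op}$ with its image under $\mathcal{F}^{\op}$; since $A^{\op}$ is, by the very definition of $\mathcal{F}$, the simplicial set $A$ with its structure maps conjugated by $\mathcal{F}$, this is precisely the assertion that the $\sigma_n$ are the components of a map of simplicial sets $\sigma : A^{\op}\to A$, while the relation $(\id_{[n]},\sigma)^2 = (\id_{[n]},e)$ becomes $\sigma^{\op}\sigma = \id_{A^{\op}}$ (note that $(-)^{\op}$ is a strict involution on $\sset$). Likewise a natural transformation $X \Rightarrow X'$ is the same datum as a simplicial map $f : A\to A'$ whose naturality squares against the automorphisms $(\id_{[n]},\sigma)$ read $\sigma' f^{\op} = f\sigma$. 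The unique normal form for morphisms makes $X\mapsto(A,\sigma)$ and the evident reconstruction mutually inverse, yielding an isomorphism $\fun(\Delta^{\op}\rtimes C_2, \set)\cong\ssetinv$ that carries the restriction functor to the forgetful functor $(A,\sigma)\mapsto A$; composing with the isomorphism of the previous paragraph gives $\set^{\nabla^{\op}}\cong\ssetinv$ over $\sset$.

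The step I expect to demand the most care --- though it is routine once the conventions are pinned down --- is matching the semidirect-product conventions (for $\mathcal{F}$, and for the source and target of a pair $(\varphi,g)$, as fixed in the definitions of $\nabla$ and of $\mathcal{F}$) with the conventions for the opposite of a simplicial set and of a simplicial map: concretely, that precomposing a simplicial set with $\mathcal{F}^{\op}$ gives exactly its opposite, and that whiskering a natural transformation with $\mathcal{F}^{\op}$ agrees with applying $(-)^{\op}$.
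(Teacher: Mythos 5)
Your proposal is correct and follows essentially the same route as the paper: both reduce to the unique factorization of a morphism of $\nabla$ (equivalently $\Delta^{\op}\rtimes C_2$) into a $\Delta$-part and a $C_2$-part, and both hinge on the same commutation relation $(\id_{[m]},\sigma)\circ(\mathcal{F}(\alpha),1)=(\alpha,1)\circ(\id_{[n]},\sigma)$, which the paper checks directly on presheaves where you phrase it as naturality of $\sigma:A^{\op}\to A$ after first invoking the isomorphism $(\Delta\rtimes C_2)^{\op}\cong\Delta^{\op}\rtimes C_2$. The only difference is packaging, so no further comparison is needed.
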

\begin{proof}
Let $\sigma$ denote the non-identity element of $C_2$. If $X$ is any $\nabla$-presheaf, then the maps $(\id_{[n]}, \sigma)$ of $\nabla$ induce functions 
$(\id_{[n]}, \sigma)^* : X_n \to X_n$ for all $n$.
Let $\alpha : [m] \to [n]$ be an order-preserving map and let $\mathcal{F}(\alpha) = \tau^n \alpha \tau^m$.
Then
\[ (\id_{[m]}, \sigma) \circ (\mathcal{F}(\alpha), 1) = (\rho_{\sigma}(\mathcal{F}(\alpha)), \sigma) = (\mathcal{F}^2(\alpha), \sigma) = (\alpha, \sigma) = (\alpha, 1) \circ (\id_{[n]}, \sigma) \]
so the diagram
\[ \begin{tikzcd}
X_n \rar{(\id_{[n]}, \sigma)^*} \dar[swap]{(\mathcal{F}(\alpha), 1)^*} & X_n\dar{(\alpha, 1)^*}  \\
X_m \rar{(\id_{[m]}, \sigma)^*} & X_m 
\end{tikzcd} \]
commutes, and we see that the collection $(\id_{[n]}, \sigma)^*$ constitutes a simplicial set map $(\iota^*X)^{\op} \to \iota^*X$. It is clearly anti-involutive, so $\iota^*: \set^{\nabla^{\op}} \to \sset$ factors through the forgetful functor $\ssetinv \to \sset$.
On the other hand, given an object $(A,\sigma) \in \ssetinv$, the simplicial set $A$ admits the structure of a $\nabla$-presheaf by defining $(\alpha,\sigma)^*: A_n \to A_m$ to be 
\[
	A_n \xrightarrow{\sigma} A_n \xrightarrow{\alpha^*} A_m
\]
and $(\alpha, 1)^* : A_n \to A_m$ to be $\alpha^*$.
Thus the forgetful functor $\ssetinv \to \sset$ factors through $\iota^*$, and we get $\ssetinv \cong \set^{\nabla^{\op}}$ over $\sset$.
\end{proof}

\section{Two models for infinity categories with strict anti-involution.}

Consider the adjoint string $(L,F,R)$ between $\simpcatinv$ and $\simpcat$ of Section~\ref{section: simpcat} and the adjoint string $(\iota_*,\iota^*,\iota_!)$ between $\ssetinv \to \sset$ induced by the inclusion $\iota$ of $\Delta$ into the semidirect product $\nabla=\Delta\rtimes C_2$. 
There is a Quillen equivalence~\cite[Theorem 2.2.5.1]{Lurie:HTT} 
\[ \hcr : \sset \rightleftarrows \simpcat : \hcn \]  
between the Joyal model structure and the Bergner model structure.
Our goal for this section is to show that this Quillen equivalence lifts to a Quillen equivalence
\[ \hcrinv : \ssetinv \rightleftarrows \simpcatinv : \hcninv \]
between the right-induced model structures.

\begin{lemma}\label{lemma: w and z exist}
There exist natural isomorphisms $z: (\hcn(-))^{\op}\to \hcn((-)^{\op})$ 
and $w: (\hcr(-))^{\op}\to \hcr((-)^{\op})$ such that 
$z^{-1}=z^{\op}$, $w^{-1}=w^{op}$, 
and $w$ and $z$ are related by the formula\footnote{Here $\varepsilon$ (resp. $\eta$) is the counit (resp. unit) of the adjunction $\hcr \dashv \hcn$.}
\[
w_A^{-1} = \varepsilon_{(\hcr A)^{\op}} \circ \hcr (z_{\hcr A}) \circ \hcr ((\eta_{A})^{\op}).
\]
\end{lemma}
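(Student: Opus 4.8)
The plan is to extract everything from a single piece of combinatorial data: a canonical identification of $\hcr(\Delta^n)$ with its own opposite. Write $(-)^{\op}$ for the strict, covariant, involutive self-equivalence of $\sset$ (respectively $\simpcat$) given by passing to the opposite simplicial set (respectively simplicial category); this self-equivalence of $\simpcat$ commutes with $\hcr$ only up to the twist coming from the order-reversal involution $\mathcal{F}$ of $\Delta$. Concretely, using the poset model $\hcr(\Delta^n)(i,j) = N(P_{i,j})$, where $P_{i,j}$ is the poset of subsets of $\{i, i+1, \dots, j\}$ containing both $i$ and $j$, I would write down $\theta_n : \hcr(\Delta^n)^{\op} \to \hcr(\Delta^n)$ as the map which is $k \mapsto n-k$ on objects and $S \mapsto \{\, n - s : s \in S \,\}$ on mapping complexes. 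Since composition in $\hcr(\Delta^n)$ is union of subsets, a commutative operation, this is a well-defined isomorphism of simplicial categories, and because $k \mapsto n-k$ is an involution one gets $\theta_n^{\op} = \theta_n^{-1}$. The key point to verify is that $\{\theta_n\}$ is natural against the cosimplicial structure maps of $[n] \mapsto \hcr(\Delta^n)$ \emph{twisted by $\mathcal{F}$}; explicitly, $\hcr(\mathcal{F}\alpha) \circ \theta_m = \theta_n \circ \hcr(\alpha)^{\op}$ for every $\alpha : [m] \to [n]$ in $\Delta$. This is essentially the assertion, due to Lurie and to Dugger--Spivak, that $\hcr$ commutes with opposites; it is a routine but somewhat fiddly check on the combinatorial model, and it is the only genuinely computational step.

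Granting this, I would obtain $z$ directly from the description $\hcn(\mathcal{C})_n = \mor_{\simpcat}(\hcr(\Delta^n), \mathcal{C})$. For a simplicial category $\mathcal{C}$, define the $[n]$-component of $z_\mathcal{C} : (\hcn\mathcal{C})^{\op} \to \hcn(\mathcal{C}^{\op})$ by sending $g : \hcr(\Delta^n) \to \mathcal{C}$ to $g^{\op} \circ \theta_n^{-1} : \hcr(\Delta^n) \to \mathcal{C}^{\op}$. The twisted naturality of $\theta$ is exactly what makes this a map of simplicial sets, and it is evidently natural in $\mathcal{C}$, so $z$ is a natural transformation $(\hcn(-))^{\op} \Rightarrow \hcn((-)^{\op})$. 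Its inverse has $[n]$-component $g' \mapsto (g')^{\op} \circ \theta_n^{-1}$ (here one uses $\theta_n^{\op} = \theta_n^{-1}$), and comparing formulas shows at once that $z^{-1} = z^{\op}$, since on $[n]$-simplices both are the function $g' \mapsto (g')^{\op} \circ \theta_n^{-1}$.

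For $w$ I would use that $\hcr$, being a left adjoint, preserves colimits, as does $(-)^{\op}$ on both sides; hence $(-)^{\op} \circ \hcr$ and $\hcr \circ (-)^{\op}$ are colimit-preserving functors $\sset \to \simpcat$, and a natural transformation between them is determined by its restriction to the representables $\Delta^n$. Taking this restriction to be built from $\theta_n$ (after identifying $(\Delta^n)^{\op} \cong \Delta^n$ by order reversal, the analogous statement for representable simplicial sets) yields a natural isomorphism $w : (\hcr(-))^{\op} \to \hcr((-)^{\op})$, and $w^{-1} = w^{\op}$ again reduces, by the same determined-on-representables principle, to $\theta_n^{\op} = \theta_n^{-1}$. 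Alternatively, $z$ is the mate of $w^{-1}$ under the adjunction $\hcr \dashv \hcn$ (with $(-)^{\op}$ on both sides), and from this vantage point the footnote formula $w_A^{-1} = \varepsilon_{(\hcr A)^{\op}} \circ \hcr(z_{\hcr A}) \circ \hcr((\eta_A)^{\op})$ is precisely the formula expressing $w^{-1}$ as the mate of $z$, which holds because mating is involutive; if one prefers to argue directly, both sides of this identity are natural transformations $\hcr((-)^{\op}) \Rightarrow (\hcr(-))^{\op}$ between colimit-preserving functors, so it need only be checked on the $\Delta^n$, where it is a finite unwinding of the triangle identities and the definition of $\theta$.

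The only real obstacle is the combinatorial naturality of $\theta$ flagged in the first paragraph: tracking how the two opposite-category involutions interact with $\hcr(\Delta^{\bullet})$ and with the $C_2$-action on $\Delta$. Everything else is formal: once $\theta$ is in hand, $z$, $w$, and all of their stated properties drop out of cocontinuity of $\hcr$, the adjunction $\hcr \dashv \hcn$, and the single observation that order reversal is an involution.
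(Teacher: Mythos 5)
Your proposal is correct and matches the paper's (omitted) argument: the paper likewise proceeds by an explicit construction of $z$ from the combinatorial self-duality of $\hcr(\Delta^n)$, with $w$ then determined as the mate of $z$ via the displayed formula. Your sketch in fact supplies more detail than the paper does, with the only deferred step being the (genuinely fiddly but routine) twisted naturality of $\theta_n$ against $\Delta$ and the involution $\mathcal{F}$.
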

This can be shown by an explicit construction of $z$ (which then determines $w$ by the formula of the lemma), which we omit. 
The existence of natural isomorphisms is well-known (see, for instance,~\cite[\S 1.2.1]{Lurie:HTT}), but to our knowledge no one has previously needed the coherence information of this lemma.

 For a map $\tau:X^{\op}\to X$ in $\simpcat$, define the map $\tau' : \hcn(X)^{\op} \to \hcn(X)$ in $\sset$ to be $\hcn(\tau)\circ z_X$. 
 Similarly, for $\sigma:A^{\op}\to A$ in $\sset$, define $\sigma'$ in $\simpcat$ to be $\hcr(\sigma)\circ w_A$.

The formal properties of being inverse to their opposites immediately yield the following, whose proof we also omit.
 \begin{lemma}
Suppose $\tau:X^{\op}\to X$ is a map in $\simpcat$ and $\sigma:A^{\op}\to A$ is a map in $\sset$.
Then:
 \begin{enumerate}
 \item the map $\tau$ is an anti-involution in $\sset$ if and only if $\tau'$ is an anti-involution in $\simpcat$.
 \item the map $\sigma$ is an anti-involution in $\simpcat$ if and only if $\sigma'$ is an anti-involution in $\sset$.
\end{enumerate}
\end{lemma}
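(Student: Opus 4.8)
The plan is to prove part~(1) by a direct computation of $(\tau')^{\op}\circ\tau'$, expressing it in terms of $\hcn(\tau^{\op}\circ\tau)$; part~(2) will then follow by running the identical computation with the pair $(\hcr,w)$ in place of $(\hcn,z)$. The only input beyond formal bookkeeping with opposites is the coherence $z^{-1}=z^{\op}$ (resp.\ $w^{-1}=w^{\op}$) of Lemma~\ref{lemma: w and z exist}, which at the level of components reads $(z_X)^{-1}=(z_{X^{\op}})^{\op}$, and hence also $(z_X)^{\op}=(z_{X^{\op}})^{-1}$ after replacing $X$ by $X^{\op}$.

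Recalling that $\tau'=\hcn(\tau)\circ z_X$ and that $(-)^{\op}$ is a strict covariant involution on $\simpcat$ and on $\sset$, I would first expand
\[
(\tau')^{\op}\circ\tau'=(\hcn\tau)^{\op}\circ(z_X)^{\op}\circ\hcn(\tau)\circ z_X .
\]
Naturality of $z$ at the morphism $\tau\colon X^{\op}\to X$ gives $z_X\circ(\hcn\tau)^{\op}=\hcn(\tau^{\op})\circ z_{X^{\op}}$, hence $(\hcn\tau)^{\op}=z_X^{-1}\circ\hcn(\tau^{\op})\circ z_{X^{\op}}$. Substituting this in and using the coherence to rewrite $z_{X^{\op}}\circ(z_X)^{\op}=z_{X^{\op}}\circ(z_{X^{\op}})^{-1}=\id$, then collecting $\hcn(\tau^{\op})\circ\hcn(\tau)=\hcn(\tau^{\op}\circ\tau)$, yields
\[
(\tau')^{\op}\circ\tau'=z_X^{-1}\circ\hcn(\tau^{\op}\circ\tau)\circ z_X .
\]
From here the equivalence is immediate. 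If $\tau$ is an anti-involution, i.e.\ $\tau^{\op}\circ\tau=\id_{X^{\op}}$, the right-hand side is $z_X^{-1}\circ z_X=\id$, so $\tau'$ is an anti-involution. Conversely, if $(\tau')^{\op}\circ\tau'=\id$, then $\hcn(\tau^{\op}\circ\tau)=z_X\circ z_X^{-1}=\id_{\hcn(X^{\op})}$ since $z_X$ is invertible, and as $\hcn$ is faithful this forces $\tau^{\op}\circ\tau=\id_{X^{\op}}$.

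For part~(2), the same manipulations with $\hcr$, $w$, $A$, $\sigma$ in place of $\hcn$, $z$, $X$, $\tau$ give $(\sigma')^{\op}\circ\sigma'=w_A^{-1}\circ\hcr(\sigma^{\op}\circ\sigma)\circ w_A$, and faithfulness of $\hcr$ finishes the argument; note that only the self-inverse property of $w$, and not the formula of Lemma~\ref{lemma: w and z exist} relating $w$ to $z$, is used here. Faithfulness of $\hcn$ and $\hcr$ is standard: for $\hcn$, a simplicial functor is detected on objects already by $\hcn(-)_0$ and on each mapping space by the maps out of the cubes $\hcr(\Delta^n)(0,n)$ recorded in $\hcn(-)_n$; for $\hcr$, the unit of $\hcr\dashv\hcn$ is a pointwise monomorphism. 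The only point requiring care throughout is the variance bookkeeping --- over which object a given opposite is taken, and which component of the natural isomorphism $z$ (or $w$) one is entitled to invert --- since the underlying computation is otherwise entirely formal; this is the only real obstacle.
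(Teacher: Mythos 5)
Your central computation is correct and is surely the ``formal'' argument the paper has in mind when it omits the proof: naturality of $z$ at $\tau$ gives $(\hcn\tau)^{\op}=z_X^{-1}\circ\hcn(\tau^{\op})\circ z_{X^{\op}}$, the instance $(z_X)^{\op}=(z_{X^{\op}})^{-1}$ of the coherence kills the middle terms, and one lands on $(\tau')^{\op}\circ\tau'=z_X^{-1}\circ\hcn(\tau^{\op}\circ\tau)\circ z_X$; part~(2) is the same computation with $(\hcr,w)$, and you are right that the formula relating $w$ to $z$ plays no role here. The one place where your argument is thinner than it should be is the converse (``only if'') directions, which are \emph{not} purely formal: to pass from $\hcn(\tau^{\op}\circ\tau)=\id$ to $\tau^{\op}\circ\tau=\id$ you need $\hcn$ (resp.\ $\hcr$) to reflect identities, and your justifications of faithfulness are only gestures. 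Both facts are true but each needs an actual argument: for $\hcn$ one must show the counit $\hcr\hcn X\to X$ is surjective on every mapping space, i.e.\ that every $n$-simplex of $X(x,y)$ is recorded in the $(0,n+1)$-component of some simplicial functor $\hcr(\Delta^{n+1})\to X$, which requires exhibiting such a functor (say by collapsing $\hcr(\Delta^{n+1})$ onto a two-object simplicial category with mapping space $\Delta^n$); for $\hcr$, the claim that the unit is a pointwise monomorphism is most cleanly extracted from the Dugger--Spivak necklace description of the mapping spaces of $\hcr(A)$, and one must also check that nondegenerate simplices stay nondegenerate so that injectivity on nondegenerates upgrades to levelwise injectivity. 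Note finally that only the ``if'' directions are used in the construction of $\hcrinv$ and $\hcninv$, which is presumably why the paper regards the statement as immediate; if you want the full biconditional, the faithfulness input is genuinely extra content and should be stated and proved (or cited) rather than asserted.
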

Now we can define a lift of the adjunction between $\sset$ and $\simpcat$ to their anti-involutive versions.
\begin{definition}\label{inv definitions}
	If $(A,\sigma) \in \ssetinv$, define 
	\[ \hcrinv(A,\sigma) \coloneqq (\hcr(A), \sigma') \in \simpcatinv. \]
	If $(X, \tau)$ is an anti-involutive simplicially-enriched category, define
	\[ \hcninv(X,\tau) \coloneqq ( \hcn(X), \tau') \in \ssetinv. \]
\end{definition}
\begin{theorem}
The functor $\hcrinv$ is left adjoint to $\hcninv$.
\end{theorem}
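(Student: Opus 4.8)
The plan is to construct the unit and counit of the adjunction $\hcrinv \dashv \hcninv$ directly from those of $\hcr \dashv \hcn$, checking that the underlying simplicial set (resp. simplicial category) maps are compatible with the anti-involutions. First I would recall that on underlying objects $\hcrinv(A,\sigma)$ has underlying simplicial category $\hcr(A)$ and $\hcninv(X,\tau)$ has underlying simplicial set $\hcn(X)$, so if the unit $\eta_A : A \to \hcn\hcr(A)$ and counit $\varepsilon_X : \hcr\hcn(X) \to X$ of the base adjunction are morphisms in $\ssetinv$ and $\simpcatinv$ respectively, then the triangle identities are inherited verbatim from the base adjunction (since the forgetful functors $\simpcatinv \to \simpcat$ and $\ssetinv \to \sset$ are faithful and the composites $F\hcrinv = \hcr F$, $F\hcninv = \hcn F$ hold on the nose). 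So the entire content is the two compatibility checks.

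For the unit, I must show $\eta_A : (A,\sigma) \to \hcninv\hcrinv(A,\sigma) = (\hcn\hcr(A), (\sigma')')$ is a map in $\ssetinv$, i.e. that $(\sigma')' \circ \eta_A^{\op} = \eta_A \circ \sigma$. Unwinding the definitions, $\sigma' = \hcr(\sigma)\circ w_A$ and then $(\sigma')' = \hcn(\sigma')\circ z_{\hcr A} = \hcn(\hcr(\sigma)\circ w_A)\circ z_{\hcr A}$. The identity to verify thus becomes an equation among $\eta$, $z$, $w$, and naturality of $\eta$; this is exactly where the coherence formula of Lemma~\ref{lemma: w and z exist} relating $w^{-1}$ to $\varepsilon$, $\hcr(z)$, and $\hcr(\eta^{\op})$ is needed. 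I expect to rewrite $w_A$ using that formula (or its inverse), apply naturality of $\eta$ with respect to $\sigma$ and with respect to the relevant structure maps, and use a triangle identity of the base adjunction to collapse a composite of $\hcr$, $\hcn$, $\varepsilon$, $\eta$ back down to an identity. The counit check for $\varepsilon_X : \hcrinv\hcninv(X,\tau) \to (X,\tau)$ is dual: I must show $\tau \circ \varepsilon_X^{\op} = \varepsilon_X \circ (\tau')'$, and the same coherence relation plus naturality and a triangle identity should do it.

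Concretely the key steps, in order, are: (1) observe $F\hcrinv = \hcr F$ and $F\hcninv = \hcn F$ strictly, so it suffices to produce unit/counit whose underlying maps are $\eta$, $\varepsilon$; (2) prove $\eta_A$ is an $\ssetinv$-morphism $(A,\sigma)\to\hcninv\hcrinv(A,\sigma)$ by the diagram chase above, using Lemma~\ref{lemma: w and z exist}, naturality of $\eta$, and a triangle identity; (3) dually prove $\varepsilon_X$ is a $\simpcatinv$-morphism; (4) conclude that $\eta$, $\varepsilon$ define natural transformations between the relevant composites in $\ssetinv$ and $\simpcatinv$ (naturality in the invariant categories follows from naturality in the base categories together with faithfulness of the forgetful functors); (5) deduce the triangle identities for $\hcrinv$, $\hcninv$ from those of $\hcr$, $\hcn$ by faithfulness of $F$.

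The main obstacle is step (2) (equivalently (3)): getting the coherence bookkeeping exactly right so that the compatibility equation for $(\sigma')'$ reduces cleanly. The relation in Lemma~\ref{lemma: w and z exist} is stated for $w^{-1}$ in terms of $\varepsilon$, $\hcr(z)$, $\hcr(\eta^{\op})$, so I will likely need its inverted/transposed form and the identities $z^{-1}=z^{\op}$, $w^{-1}=w^{\op}$ to manipulate $\hcn(w_A)$; pushing the computation through without sign or variance errors is the delicate part. Everything else is formal.
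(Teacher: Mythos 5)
Your proposal is correct, and it leans on the same essential ingredient as the paper --- the coherence formula of Lemma~\ref{lemma: w and z exist} together with naturality of $\eta$ and $\varepsilon$ --- but it packages the adjunction differently. The paper proves the hom-set bijection directly: for an arbitrary adjunct pair $f: \hcr(A) \to X$, $h: A \to \hcn(X)$ and arbitrary $\sigma: A^{\op}\to A$, $\tau: X^{\op}\to X$, the equivariance square $\tau f^{\op} = f\sigma'$ commutes if and only if the square $\tau' h^{\op} = h\sigma$ does; this at once shows that $\mor(\hcr A, X)\cong \mor(A,\hcn X)$ restricts to a bijection on $\simpcatinv$- and $\ssetinv$-morphisms, with naturality inherited. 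You instead verify that the unit and counit of the base adjunction are themselves equivariant and then import the triangle identities through the faithful forgetful functors. Your version is a specialization of the paper's: taking $f=\id_{\hcr A}$, $h=\eta_A$ (resp.\ $f=\varepsilon_X$, $h=\id_{\hcn X}$) in the paper's two-square equivalence yields exactly your step (2) (resp.\ (3)). The hard computation --- unwinding $(\sigma')' = \hcn(\hcr(\sigma)\circ w_A)\circ z_{\hcr A}$ against the relation $w_A^{-1} = \varepsilon_{(\hcr A)^{\op}} \circ \hcr (z_{\hcr A}) \circ \hcr ((\eta_{A})^{\op})$ and the identities $z^{-1}=z^{\op}$, $w^{-1}=w^{\op}$ --- is the same in both, and neither you nor the paper writes it out in full. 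The paper's formulation buys a slightly leaner argument (no separate appeal to the triangle identities or to naturality in the lifted categories), while yours is the more standard unit--counit template; one small notational caution is that you write both forgetful functors as $F$, whereas the paper distinguishes $F:\simpcatinv\to\simpcat$ from $\iota^*:\ssetinv\to\sset$ (so the identities are $F\hcrinv=\hcr\iota^*$ and $\iota^*\hcninv=\hcn F$).
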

\begin{proof}Let $f:\hcr(A)\to X$ have adjunct morphism $h:A\to\hcn(X)$. 
Let $\tau:X^{\op}\to X$ and $\sigma:A^{\op}\to A$ be any two maps. 
By an argument using the compatibility between $w$ and $z$ of Lemma~\ref{lemma: w and z exist} and naturality of the unit and counit of the $(\hcr,\hcn)$ adjunction, the commutativity of the following two diagrams is equivalent.

	\[ \begin{tikzcd}
	\hcr(A)^{\op} \rar{\sigma'}\dar{f^{\op}} & 
	\hcr{A} \dar{f} 
	&A^{\op} \arrow[r, "\sigma"] \dar{h^{\op}} &   
	A \dar{h}
	\\
	X^{\op} \arrow[r, "\tau"] &  
	X &	
	(\hcn X)^{\op}  \rar{\tau'} & \hcn(X)
	\end{tikzcd} \]
This suffices to show that the lifted functors are adjoint.
\end{proof} 
We now have the following collection of functors.
\[
\begin{tikzcd}[row sep=large]
	\simpcatinv 
	\rar["F" description]
	\dar[shift left, "\hcninv"] & 
	\simpcat 
	\lar[bend right=20, "L" swap] 
	\lar[bend left=20, "R"]
	\dar[shift left, "\hcn"] \\
	\ssetinv \rar["\iota^*" description] \uar[shift left, "\hcrinv"] & \sset \uar[shift left, "\hcr"]
	\lar[bend right=20, "\iota_!" swap] 
	\lar[bend left=20, "\iota_*"] 
\end{tikzcd}
\]
It is immediate from Definition~\ref{inv definitions} that $F\hcrinv =\hcr \iota^*$ and $\iota^*\hcninv = \hcn F$. 
\begin{theorem}\label{theorem: hcn qe lifts}
	The adjoint pair $(\hcrinv, \hcninv)$ is a Quillen equivalence between the model structure on $\simpcatinv$ from~\ref{section: simpcat} and the Joyal model structure on $\ssetinv$ from Corollary~\ref{corollary: joyal}.
\end{theorem}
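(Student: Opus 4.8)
The plan is to prove the statement in two stages: first that $(\hcrinv,\hcninv)$ is a Quillen adjunction, and then that it is a Quillen equivalence via the standard derived-counit criterion (e.g.\ \cite[Corollary~1.3.16]{Hovey:MC}). The basic tools are the two identities $F\hcrinv=\hcr\iota^*$ and $\iota^*\hcninv=\hcn F$ recorded above, the fact that $(\hcr,\hcn)$ is a Quillen equivalence, the fact that $\hcr$ both preserves \emph{and reflects} weak equivalences (as used already in the proof of Corollary~\ref{corollary: joyal}), and the fact that---because the model structures on $\simpcatinv$ and $\ssetinv$ are right-induced---the forgetful functors $F$ and $\iota^*$ create weak equivalences and fibrations, and in particular are themselves right Quillen (Theorem~\ref{theorem: adjoint string, Quillen condition}).

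For the adjunction, I would verify that $\hcninv$ is right Quillen by showing it preserves fibrations and acyclic fibrations. If $f$ is a fibration (resp.\ acyclic fibration) of $\simpcatinv$, then $Ff$ is one in $\simpcat$ since $F$ is right Quillen, so $\hcn Ff=\iota^*\hcninv f$ is one in $\sset$ since $\hcn$ is right Quillen, and then $\hcninv f$ is one in $\ssetinv$ since $\iota^*$ creates fibrations. Hence $(\hcrinv,\hcninv)$ is a Quillen adjunction.

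For the equivalence I would check the two conditions of the criterion. \emph{(i)} The functor $\hcrinv$ reflects weak equivalences (between all objects, a fortiori between cofibrant ones): if $\hcrinv g$ is a weak equivalence then $F\hcrinv g=\hcr\iota^* g$ is a Dwyer--Kan equivalence, so $\iota^* g$ is a Joyal equivalence because $\hcr$ reflects weak equivalences, whence $g$ is a weak equivalence since $\iota^*$ creates them. \emph{(ii)} The derived counit is a weak equivalence at each fibrant object. Fix $(X,\tau)\in\simpcatinv$ fibrant---so $X=FX$ is fibrant in $\simpcat$---and a cofibrant replacement $q\colon Q\hcninv(X,\tau)\xrightarrow{\sim}\hcninv(X,\tau)$; the derived counit is $d=\varepsilon\circ\hcrinv(q)$. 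Applying $F$ and using the two identities together with the compatibility of the counit of $(\hcrinv,\hcninv)$ with that of $(\hcr,\hcn)$ gives $Fd=\varepsilon^{\hcr\hcn}_X\circ\hcr(\iota^* q)$. Now $\iota^* q$ is a Joyal equivalence (as $\iota^*$ creates weak equivalences), so $\hcr(\iota^* q)$ is a Dwyer--Kan equivalence; and $\varepsilon^{\hcr\hcn}_X\colon\hcr\hcn X\to X$ is the derived counit of the Quillen equivalence $(\hcr,\hcn)$ at the fibrant object $X$---its source $\hcn X$ is automatically cofibrant, every simplicial set being cofibrant in the Joyal structure---hence a Dwyer--Kan equivalence. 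Thus $Fd$ is a Dwyer--Kan equivalence, and therefore $d$ is a weak equivalence in $\simpcatinv$ since $F$ creates them. The criterion then yields the Quillen equivalence.

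The one point needing care is the compatibility invoked in (ii): that $F$ sends the counit of $(\hcrinv,\hcninv)$ to the counit of $(\hcr,\hcn)$, and dually that $\iota^*$ sends the unit to the unit. This is not formal from the bare functor identities, but it follows from the construction of $\hcrinv\dashv\hcninv$: the hom-set bijection produced in the proof that $\hcrinv$ is left adjoint to $\hcninv$ is literally the restriction of the hom-set bijection for $\hcr\dashv\hcn$, so the universal maps correspond under $F$ and $\iota^*$. Granting this, everything else reduces to bookkeeping with the creation properties of $F$ and $\iota^*$ and the behaviour of $\hcr$ on weak equivalences.
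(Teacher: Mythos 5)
Your argument is correct, but it follows a genuinely different route from the paper's. The paper verifies the definition of Quillen equivalence directly in its ``adjunct maps'' form: for $(A,\sigma)$ cofibrant and $(X,\tau)$ fibrant, a map $f\colon \hcrinv(A,\sigma)\to(X,\tau)$ and its adjunct $h$ are tested by applying $F$ and $\iota^*$; since these create weak equivalences, $A$ is automatically cofibrant, $F(X,\tau)$ is fibrant, and the adjunct correspondence restricts to that of $\hcr\dashv\hcn$, the claim reduces in one step to the Quillen equivalence downstairs. You instead check the reflection-plus-derived-counit criterion. Both routes hinge on exactly the compatibility you flag at the end---that the hom bijection of the lifted adjunction is the restriction of the one for $\hcr\dashv\hcn$, equivalently $F\tilde\varepsilon=\varepsilon F$; the paper's version of this is the line ``Further, $f\colon\hcr(A)\to X$ is adjunct to $h\colon A\to\hcn(X)$,'' and it appears as an explicit hypothesis in the abstract formulation, Theorem~\ref{theorem lanari}. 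Two smaller points of comparison. First, your step (i) invokes the fact that $\hcr$ reflects \emph{all} weak equivalences, a special feature of this particular adjunction coming from Lurie's characterization of Joyal equivalences; the paper's argument needs only that $(\hcr,\hcn)$ is a Quillen equivalence and so formalizes to situations (such as Example~\ref{example CSS vs qcat}) where no such reflection statement is available. You could avoid this extra input by checking reflection only between cofibrant objects, where it follows from the Quillen-equivalence property itself. Second, you supply the verification that $\hcninv$ is right Quillen, which the paper's proof of this theorem leaves implicit and only records later, in the proof of Theorem~\ref{theorem lanari}; your argument for it is the same one.
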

\begin{proof}
	Suppose that $(A, \sigma)$ is cofibrant in $\ssetinv$, $(X, \tau)$ is fibrant in $\simpcatinv$, and $f: \hcrinv(A,\sigma) \to (X,\tau)$ is adjunct to $h: (A, \sigma) \to \hcninv(X,\tau)$. 
	We must show that $f$ is a weak equivalence if and only if $h$ is.
	Since the model structures are right-induced, $f$ (resp. $h$) is a weak equivalence if, and only if, $f: \hcr(A) \to X$ (resp. $h: A \to \hcn(X)$) is.
	Further, $f: \hcr(A) \to X$ is adjunct to $h: A \to \hcn(X)$.
	It is automatic that $A = \iota^*(A,\sigma)$ is cofibrant since every object of $\sset$ is.
	Since $F$ preserves fibrations and the terminal object, we know that $X = F(X,\tau)$ is fibrant.
	Since $(\hcr, \hcn)$ is a Quillen equivalence between $\simpcat$ and $\sset$, we conclude that
	$f$ is a weak equivalence if and only if $h$ is a weak equivalence.
\end{proof}

As observed by Edoardo Lanari, the proof of Theorem~\ref{theorem: hcn qe lifts} is formal.
\begin{theorem}\label{theorem lanari}
Suppose we have a map of adjunctions (c.f.~\cite[IV.7]{MacLane:CWM}) $(F,F')$ from $(\tilde{A}\dashv \tilde{B})$ to $(A\dashv B)$, that is, a diagram of functors
	\[ \begin{tikzcd}
	\mathcal{N} \dar{F} \rar[shift left]{\tilde{A}} & \mathcal{N}' \lar[shift left]{\tilde{B}} \dar{F'} \\
	\mathcal{M} \rar[shift left]{A} & \mathcal{M}' \lar[shift left]{B} \\
	\end{tikzcd} \]
	where $\tilde{A} \dashv \tilde{B}$ and $A\dashv B$ are adjunctions, $AF = F'\tilde{A}$, $F\tilde{B} = BF'$, and $F' \tilde \varepsilon_f = \varepsilon_{F'f}$.
	Suppose further that all the categories are model categories, $(A,B)$ is a Quillen equivalence, and:
	\begin{itemize}
		\item $F$ and $F'$ create weak equivalences;
		\item $F$ reflects fibrations and preserves cofibrant objects;
		\item $F'$ preserves fibrations and fibrant objects; 
	\end{itemize}
	Then $(\tilde A, \tilde{B})$ is a Quillen equivalence.
\end{theorem}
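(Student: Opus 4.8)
The plan is to establish the two halves of a Quillen equivalence in turn. First, $(\tilde{A}, \tilde{B})$ is a Quillen adjunction: it suffices to check that $\tilde{B}$ is right Quillen. If $g$ is a fibration in $\mathcal{N}'$, then $F'(g)$ is a fibration since $F'$ preserves fibrations, hence $BF'(g) = F\tilde{B}(g)$ is a fibration since $B$ is right Quillen (the Quillen equivalence $(A,B)$ being in particular a Quillen adjunction), hence $\tilde{B}(g)$ is a fibration since $F$ reflects fibrations. If in addition $g$ is a weak equivalence, then $F'(g)$ is a weak equivalence because $F'$ creates weak equivalences, so $F\tilde{B}(g) = BF'(g)$ is an acyclic fibration, in particular a weak equivalence, so $\tilde{B}(g)$ is a weak equivalence because $F$ creates weak equivalences. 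Thus $\tilde{B}$ preserves fibrations and acyclic fibrations.

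The heart of the matter --- and the only place the counit-compatibility hypothesis $F'\tilde{\varepsilon}_f = \varepsilon_{F'f}$ is used --- is the observation that $F$ and $F'$ carry $\tilde{A}\dashv\tilde{B}$-adjuncts to $A\dashv B$-adjuncts. Precisely: if $f: \tilde{A}X \to Y$ has adjunct $h: X \to \tilde{B}Y$, then, regarding $F'(f)$ as a map $AFX \to F'Y$ via $F'\tilde{A} = AF$ and $F(h)$ as a map $FX \to BF'Y$ via $F\tilde{B} = BF'$, the map $F(h)$ is the $A\dashv B$-adjunct of $F'(f)$. I would deduce this by applying $F'$ to the triangle identity $f = \tilde{\varepsilon}_Y \circ \tilde{A}(h)$ and rewriting, using $F'\tilde{\varepsilon}_Y = \varepsilon_{F'Y}$ and $F'\tilde{A}(h) = AF(h)$, to get $F'(f) = \varepsilon_{F'Y} \circ A(F(h))$; this is exactly the statement that $F(h)$ is the adjunct of $F'(f)$.

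Granting this, the Quillen-equivalence criterion follows formally. Let $X$ be cofibrant in $\mathcal{N}$, let $Y$ be fibrant in $\mathcal{N}'$, and let $f: \tilde{A}X \to Y$ have adjunct $h: X \to \tilde{B}Y$. Then $FX$ is cofibrant since $F$ preserves cofibrant objects, and $F'Y$ is fibrant since $F'$ preserves fibrant objects. Now $f$ is a weak equivalence if and only if $F'(f)$ is one, since $F'$ creates weak equivalences; by the previous paragraph and the hypothesis that $(A,B)$ is a Quillen equivalence, $F'(f)$ is a weak equivalence if and only if its adjunct $F(h)$ is one; and $F(h)$ is a weak equivalence if and only if $h$ is one, since $F$ creates weak equivalences. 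Chaining these three equivalences gives the criterion, and hence the theorem. I do not expect a genuine obstacle: the argument is purely formal, and the only step that requires care is the bookkeeping in the adjunct-preservation observation --- namely checking that compatibility of counits, together with the equalities among the composite functors, forces $F$ and $F'$ to intertwine the two adjunction bijections.
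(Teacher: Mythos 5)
Your proposal is correct and follows essentially the same route as the paper's proof: show $\tilde{B}$ is right Quillen via the chain $F\tilde{B} = BF'$ together with $F$ reflecting (acyclic) fibrations, then use the counit compatibility to show that $F$ and $F'$ intertwine the adjunction bijections, and chain the resulting equivalences on the cofibrant/fibrant pair. The only difference is that you spell out the adjunct-preservation computation that the paper leaves implicit.
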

\begin{proof}
	The functor $\tilde B$ is a right Quillen functor because $BF'$ preserves (acyclic) fibrations and $F$ reflects them.

	Suppose that $c \in \mathcal{N}$ is cofibrant, $f\in \mathcal{N}'$ is fibrant, and $h: c \to \tilde B f$ is adjunct to $h' = \tilde \varepsilon_f \circ \tilde A(h): \tilde A c \to f$.
	Since $F$ creates weak equivalences, $h$ is a weak equivalence if and only if $Fh : Fc \to F \tilde B f = B F' f$ is.
	Since $Fc$ is cofibrant, $F'f$ is fibrant, and $B$ is a right Quillen equivalence, $Fh$ is a weak equivalence if and only if the adjunct $AFc \to F'f$ of $Fh$ is a weak equivalence.
	The compatibility condition between $F$, $F'$, and the adjunctions implies that $F'h'$ is the adjunct of $Fh$.  
	Because $F'$ creates weak equivalences, $F'h'$ is a weak equivalence if and only if $h'$ is a weak equivalence.	
\end{proof}
In particular, suppose that the model structure on $\mathcal{N}$ is lifted from $\mathcal{M}$ along $F$ using Theorem~\ref{theorem: adjoint string, Quillen condition} and that the model structure on $\mathcal{N}'$ is lifted from $\mathcal{M}'$ along $F'$ (which need only be a right adjoint). Then the dashed model-categorical conditions of Theorem~\ref{theorem lanari} are satisfied by assumption.

\begin{example}\label{example CSS vs qcat}
Recall the model structure on $\sset^{\nabla^{\op}} = \set^{(\nabla \times \Delta)^{\op}}$ from Example~\ref{example: CSS} lifted from the complete Segal space model structure.
Consider the pair of adjoint functors $p_1 \dashv i_1$, where $p_1 : \nabla \times \Delta \to \nabla$ is projection and $i_1 = (-) \times [0] : \nabla \to \nabla \times \Delta$.
These induce an adjunction 
\begin{equation}\label{upstairs CSS adj} p_1^* : \set^{\nabla^{\op}} \rightleftarrows \set^{(\nabla \times \Delta)^{\op}} : i_1^* \end{equation}
which maps to the adjunction
\begin{equation}\label{downstairs CSS adj} p_1^* : \set^{\Delta^{\op}} \rightleftarrows \set^{(\Delta \times \Delta)^{\op}} : i_1^* \end{equation}
from~\cite{JoyalTierney:QCSS}. 
One of the main theorems of~\cite{JoyalTierney:QCSS} is that \eqref{downstairs CSS adj} is a Quillen equivalence between the Joyal model structure on the left and Rezk's complete Segal space model structure on the right.
Theorem~\ref{theorem lanari} implies that \eqref{upstairs CSS adj} is a Quillen equivalence as well.
\end{example}

\begin{remark}
In~\cite{HeineLopezAvilaSpitzweck:ICDHMILSM}, the authors define the notion of `infinity category with duality.' 
One considers the $\infty$-category of small $\infty$-categories, which admits an action of $C_2$ by sending an infinity category to its opposite. Then the $\infty$-category of small $\infty$-categories with duality is defined as the homotopy fixed points of this action.
We are curious about how this $\infty$-category compares with the one presented by the model categories in Theorem~\ref{theorem: hcn qe lifts} and Example~\ref{example CSS vs qcat}.
\end{remark}

\appendix
\pdfstringdefDisableCommands{%
  \let\enspace\empty  
}
\section{Proof of Lemma~\ref{lemma: all objects small preserved by exp}}\label{appendix: proof of lemma}

Let $A$ be a functor $\mathcal{C}\to\groundcat$ (which we write using superscripts); by~\cite[Lemma 10.4.6]{Hirschhorn:MCL} there is a cardinal $\kappa'$ so that every $\groundcat$-object in the set $\{ A^c \}_{c\in \ob\,\mathcal{C}}$ is $\kappa'$-small. Let $\kappa$ be a cardinal bigger than both $\kappa'$ and the cardinal of the set $\morphisms\,\mathcal{C}$; we claim that $A$ is $\kappa$-small. Let $\lambda$ be a regular cardinal greater than or equal to $\kappa$ and $Z: \lambda \to \groundcat^{\mathcal{C}}$ be a $\lambda$-sequence. Since colimits in functor categories are pointwise, each $Z^c : \lambda \to \groundcat$ is again a $\lambda$-sequence.
We thus have
\begin{equation}\label{equation: Tdc} T^d_c : \colim_{\beta < \lambda} \groundcat(A^c,Z^d_\beta)\to\groundcat(A^c,\colim_{\beta < \lambda} Z^d_\beta)\end{equation} is a bijection for all $c,d\in \ob\,\mathcal{C}.$
We wish to show that 
\begin{equation}\label{equation: T} T : \colim_{\beta < \lambda} \groundcat^{\mathcal{C}} (A,Z_\beta)\to\groundcat^{\mathcal{C}} (A,\colim_{\beta < \lambda} Z_\beta)\end{equation}
is an bijection as well. We begin with surjectivity.

Suppose that $q : A \to \colim_{\beta < \lambda} Z_\beta$ is in the codomain of $T$.
Let $Q^c_{\alpha(c)} : A^c \to Z_{\alpha(c)}^c$ represent the element $(T^c_c)^{-1}(q^c)$; for any $\beta$ satisfying $\alpha(c) \leq \beta < \lambda$, the map 
\[ Q^c_{\beta} : A^c \xrightarrow{Q^c_{\alpha(c)}} Z_{\alpha(c)}^c \to Z_\beta^c\] also represents $(T^c_c)^{-1}(q^c)$.

Suppose $f: c\to d$ is any map in $\mathcal{C}$ and $\beta$, is less than $\lambda$ but larger than both $\alpha(c)$ and $\alpha(d)$. 
The bottom square and the outer rectangle of the diagram 
\[
\begin{tikzcd}
A^c \rar{A^f} \dar{Q^c_\beta} & A^d \dar{Q^d_\beta} \\
Z_\beta^c \rar{Z_\beta^f} \dar{\eta_\beta^c} & Z_\beta^d \dar{\eta_\beta^d} \\
\colim_\lambda Z^c \rar & \colim_\lambda Z^d
\end{tikzcd}
\]
commute, though the top square may not commute; the vertical composites are $q^c$ and $q^d$.
Thus $Q^d_\beta \circ A^f$ and $Z^f_\beta \circ Q_\beta^c$ both represent the element $(T^d_c)^{-1}(q^d \circ A^f)$;
hence there exists an $\alpha(f)$ bigger than $\alpha(c)$ and $\alpha(d)$ so that $Q^d_{\alpha(f)} \circ A^f = Z^f_\beta \circ Q_{\alpha(f)}^c$.

Since $\lambda$ is a regular cardinal at least as great as $\kappa$, there exists a $\delta$ strictly less than $\lambda$ with $\alpha(f) \leq \delta$ for all $f \in \morphisms\,\mathcal{C}.$
By construction, $Q^d_\delta \circ A^f = Z^f_\delta \circ Q_\delta^c$ for every morphism $f$, so $Q_\delta \in \groundcat^{\mathcal{C}} (A,Z_\delta)$.
Also by construction $T_c^c(\eta_\delta^c \circ Q^c_\delta) = q^c$, hence $T(\eta_\delta \circ Q_\delta) = q$.

Now that we have shown surjectivity, we turn to injectivity.
Suppose $T(p) = T(p')$, where $p$ (resp. $p'$) is represented by $\tilde{p} : A \to Z_{\alpha}$ (resp. $\tilde{p}' : A \to Z_{\alpha'}$); the assumption $T(p) = T(p')$ means $\eta_\alpha \tilde{p} = \eta_{\alpha'} \tilde{p}' : A \to \colim_\lambda Z$.
In particular, by injectivity of $T^c_c$, $\tilde{p}^c$ and $(\tilde{p}')^c$ represent the same element of $\colim_{\beta < \lambda} \groundcat(A^c,Z^c_\beta)$ for every object $c$.
For each $c$, choose some $\alpha(c)$ strictly less than $\lambda$ and at least as great as $\alpha$ and $\alpha'$, and such that 
\begin{equation}\label{equation square} \begin{tikzcd}
	A \rar{\tilde{p}} \dar{\tilde{p}'} & Z_{\alpha} \dar \\
	Z_{\alpha'} \rar & Z_{\beta}
\end{tikzcd}\end{equation}
commutes at $c$ when $\beta$ is greater than or equal to $\alpha(c)$ (and less than $\lambda$).
By the regularity of $\lambda$, there exists $\delta$ strictly less than $\lambda$ and at least as large as all $\alpha(c)$, and the diagram \eqref{equation square} commutes for all $\beta$ at least as large as $\delta$. Thus $p=p'$.

\bibliographystyle{amsalpha}
\raggedright
\bibliography{references-2017}

\affiliationone{Gabriel C.~Drummond-Cole
\\
Center for Geometry and Physics
\\
Institute for Basic Science (IBS)
\\
Pohang 37673, Republic of Korea
\\
\email{gabriel@ibs.re.kr}
}
\affiliationtwo{
Philip Hackney 
\\ 
Department of Mathematics
\\ 
Macquarie University
\\ 
New South Wales, Australia
\\
\email{philip@phck.net} 
}
\affiliationthree{~}
\affiliationfour{
Current address: 
\\ 
Department of Mathematics
\\ 
University of Louisiana at Lafayette
\\ 
Lafayette, LA, United States of America
}
\end{document}